\newtheorem{theorem}{Theorem}
\newtheorem{lemma}{Lemma}
\newtheorem{definition}{Definition}
\newtheorem{corollary}{Corollary}
\newtheorem{remark}{Remark}
\def\REQ#1{{\rm (\ref{#1})}}
\def\d{{\rm d}}
\def\e{{\rm e}}
\def\R{{\mathbb R}}
\def\P{{\mathbb P}}
\def\E{{\mathbb E}}
\begin{document}

\title
[Gradient formula for transition semigroup]
{Gradient formula for transition semigroup corresponding to  stochastic equation driven by a system of independent  L\'evy processes
%\\ 
%{\footnotesize  15  february  2022}  
}

\author{Alexei M. Kulik}
\address{Alexei M. Kulik, Faculty of Pure and Applied Mathematics, Wroclaw University of Science and Technology, Wybrze\.ze Wyspia\'nskiego Str. 27
50-370 Wroclaw,  Poland}
\email{oleksii.kulyk@pwr.edu.pl}

\author{Szymon Peszat}
\address{Szymon Peszat,  Institute of Mathematics, Jagiellonian University, {\L}ojasiewicza 6, 30--348 Krak\'ow, Poland}
\email{napeszat@cyf-kr.edu.pl}

\author{ Enrico Priola}
\address{Enrico Priola, Dipartimento di Matematica ''F. Casorati'', University of Pavia,   Via Ferrata, 5 - 27100 Pavia,  Italy}
\email{enrico.priola@unipv.it}

 \thanks{The work of Alexei Kulik   was supported by Polish National Science Center grant 2019/33/B/ST1/02923.  The work of Szymon Peszat    was supported by Polish National Science Center grant 2017/25/B/ST1/02584. The work of Enrico Priola was supported by the grant 346300 for IMPAN from the Simons Foundation and the matching 2015-2019 Polish MNiSW fund.  }  

\begin{abstract}
Let $(P_t)$ be the transition semigroup of the Markov family $(X^x(t))$ defined by SDE
$$
\d X= b(X)\d t + \d  Z, \qquad X(0)=x,
$$
where $Z=\left(Z_1, \ldots, Z_d\right)^*$ is a system of independent real-valued  L\'evy processes. Using the Malliavin calculus we establish
%(see Theorem \ref{T1} and Remark \ref{R1})
the following gradient formula
$$
\nabla P_tf(x)= \mathbb{E}\, f\left(X^x(t)\right) Y(t,x), \qquad f\in B_b(\mathbb{R}^d),
$$
where the random field $Y$ does not depend on $f$. Moreover, in the important cylindrical $\alpha$-stable case $\alpha \in (0,2)$,   where $Z_1, \ldots , Z_d$ are $\alpha$-stable processes, we are able to prove sharp $L^1$-estimates for $Y(t,x)$. 
 Uniform  estimates
 on $\nabla P_tf(x)$ 
  are also given. 
 %\\ \ \\ 
 %{\footnotesize  E. maybe we can
 %mention existence of density  or  strong Feller property ...}
% are  given in the important cylindrical $\alpha$-stable case $\alpha \in (0,2)$,   where $Z_1, \ldots , Z_d$ are $\alpha$-stable processes.

%\noindent
%{\bf Key words}: Bismut--Elworthy--Li formula, L\'evy processes, Malliavin calculus.
\end{abstract}

\subjclass[2000]{60H10, 60H07, 60G51, 60J75}
\keywords{Bismut--Elworthy--Li formula, L\'evy processes, Malliavin calculus.}
%\date{}
%%% ----------------------------------------------------------------------
\maketitle

\section{Introduction}

% {\footnotesize  E. Introduction is still as in the  previous version}

Let $(P_t)$ be the transition semigroup of a Markov family $X=(X^x(t))$ on $\mathbb{R}^d$, that is
\begin{equation}\label{E1}
P_tf(x) =\mathbb{E}\, f(X^x(t)), \qquad f\in B_b(\mathbb{R}^d), \ t\ge 0, x\in \mathbb{R}^d.
\end{equation}
In the paper $X$ is given by the stochastic  differential equation
\begin{equation}\label{E2}
\d X^x(t)= b(X^x(t))\d t + \d Z(t), \qquad X^x(0)=x\in \mathbb{R}^d,
\end{equation}
where $b\colon \mathbb{R}^d\mapsto \mathbb{R}^d$ is a $C^2(\mathbb{R}^d,\mathbb{R}^d)$ and Lipschitz mapping    and
$$
Z(t)=\left(Z_1(t),\ldots, Z_d(t)\right)^*, \qquad t\ge 0,
$$
is a L\'evy process in $\mathbb{R}^d$. We assume that  $Z_j, \ j=1,\ldots, d$,  are independent real-valued  L\'evy processes. We denote by $m_j$ the L\'evy measure of $Z_j$. Recall that
$$
\int_{\mathbb{R}}\, (\xi^2\wedge 1)\,  m_j(\d \xi)<+\infty.
$$
We assume that each $Z_j$ is of purely jump type
 \begin{equation} \label{sw}
Z_j(t)= \int_0^t \int_{\{\xi\in \mathbb{R}\colon |\xi|\ge 1\}} \xi\, \Pi_j (\d s,\d \xi) +  \int_0^t \int_{\{\xi\in \mathbb{R}\colon |\xi|< 1\}} \xi \left[ \Pi_j (\d s,\d \xi) -\d s m_j(\d \xi)\right],
\end{equation}
where $\Pi_j(\d s, \d \xi)$ is a Poisson random measure on $[0,+\infty)\times \mathbb{R}$ with intensity measure $\d sm_j(\d \xi)$.

The main aim of this article is to establish  the following gradient formula
\begin{equation}\label{E3}
\nabla P_tf(x)= \mathbb{E}\, f\left(X^x(t)\right) Y(t,x), \qquad f\in B_b(\mathbb{R}^d),
\end{equation}
where the random field $Y$ does not depend on $f$. The gradient formulae of such type date back to \cite{Bismut}, \cite{Elworty-Li} and are frequently called the \emph{Bismut--Elworthy--Li formulae}.  Note that  \cite{Bismut}  uses an approach based on the Girsanov transformation. On  the other hand  \cite{Elworty-Li} introduces  martingale methods   to derive  formulae like \eqref{E3} in the Gaussian setting; this approach also works for jump diffusions with a non-degenerate Gaussian component (cf. Section 5 in \cite{Priola-Zabczyk}). 
%After proving \eqref{E3} we are able to prove sharp $L^1$-estimates for $Y(t,x)$ in the important cylindrical $\alpha$-stable case $\alpha \in (0,2)$,   where $Z_1, \ldots , Z_d$ are $\alpha$-stable processes (see \eqref{s2}); this is   a delicate issue.

One important consequence of \eqref{E3} is the strong Feller property of the semigroup $(P_t)$, e.g. \cite{Dong-Song}, \cite{Dong-Peng-Song-Zhang}, which in particular motivates our interest in this topic.  Moreover, such gradient formulae allow  the Greeks computations for pay-off functions in mathematical finance.  We refer to  \cite{F} where the authors  apply the Malliavin calculus on the Wiener space to the sensitivity analysis for asset price
dynamics models.

For   L\'evy-driven SDEs with a possibly degenerate Gaussian component, the Bismut--Elworthy--Li formula has been obtained in \cite{Takeuchi} under the assumption on the L\'evy measure to have a density with respect to  Lebesgue measure in $\mathbb{R}^d$; see also \cite{Wang-Xu-Zhang, Zhang} for the   Bismut--Elworthy--Li formula for an SDE driven by a subordinated Brownian motion. In our study, we are focused on the more difficult situation, where the noise is presented by a collection of one-dimensional L\'evy processes, and thus is quite singular.

%{\color{red}
In plain words, the substantial complication of the problem in our case is that the class of the random vector fields, which are ``admissible'' for the noise in the sense that they allow the integration-by-parts formula, is much more restricted. Namely, in our case only the  ``coordinate axis  differentiability directions'' in $\mathbb{R}^d$ are actually allowed,
%``coordinate axis directions'' are  differentiability directions in $
 % \mathbb{R}^d$,
%are actually allowed,
while in the case of the L\'evy measure with a density there are no
% any
limitation on these directions.
%}
For the first advances in the Malliavin calculus for L\'evy noises, supported by (singular) collection of curves, we refer to \cite{Leandre}.

In the important cylindrical $\alpha$-stable case  (i.e., when each $Z_j$ is  $\alpha$-stable)  with  $\alpha \in (0,2)$  we obtain  the sharp estimate
\begin{equation}\label{s2}
\sup_{x \in {\mathbb R}^d} {\mathbb E} |Y(t,x)| \le C_T\,  t^{-\frac{1}{\alpha}},\qquad t \in (0,T].
\end{equation}
The  method we use  to obtain \eqref{s2} seems to be of independent interest.
%is also new. %has not been used before in papers on Malliaving 
It has two main steps.
%seems to be new and have two main steps. 
The first one is  a bound for   
% \begin{equation} \label{s23}
 $ \mathbb{E}\left\vert Y(t)-Y(t,x)\right\vert$    
%\end{equation}
where $Y(t)$ corresponds to $Y(t,x)$ when $b=0$ in \eqref{E2}, i.e., $X^x(t) = x + Z(t)$. The second step concerns with $\E |Y(t)|$ (see Section 8). Both steps require sharp estimates and are quite involved (see in particular Sections 6.2 and 8). 
 %To control
%the term in \eqref{s2} in a sharp way we also use a suitable result on matrices  (see Lemma \ref{L71}).  
 Formula \eqref{s2}  implies the bound  ($\|\cdot \|_\infty$ stands for the supremum norm)
\begin{gather} \label{w22} 
\| \nabla P_tf\|_\infty := \sup_{x \in {\mathbb R}^d} |\nabla P_tf(x)| \le C_Tt^{-\frac{1}{\alpha}} \| f \|_{\infty}, \qquad f\in B_b(\mathbb{R}^d), \qquad t \in (0,T].
\end{gather}
 It seems that  when $0 < \alpha \le 1$ also  estimate \eqref{w22} is new; it  cannot be obtained by a perturbation argument which is available when $\alpha >1$.  In fact we will establish  \REQ{w22} for any process $Z$  with small jumps similar to $\alpha$-stable process.
%{\color{red} Recall that  estimates like \eqref{w22}  hold even in some  multiplicative cases, see \cite{Kulczycki-Ryznar} for such an estimate for an SDE without  drift and with a non-degenerate jump coefficient.
%}
 %{\color{blue}
 Recall that  estimates like \eqref{w22}  for $\alpha >1$ hold even in some non-degenerate multiplicative cases (see Theorem 1.1  in \cite{Kulczycki-Ryznar}; in such result    the Lipschitz   case
 $\gamma =1$ requires   $\alpha >1$).
%}
We expect that our approach should also work for SDEs with  multiplicative cylindrical noise; such an extension is a subject of our ongoing research.

Let us mention that from the analytical point of view we are concerned with the gradient estimates of the solution to the following  equation with a  non-local operator 
\begin{align*}
\frac{\partial u}{\partial t}(t,x) &= \langle a + b(x), \nabla u(t,x)\rangle \\
&\quad + \sum_{j=1}^d \int_{\mathbb{R}}\left (u(t,x+ \xi e_j)- u(t,x)-\chi_{\{ |\xi |\le 1\}} \xi \frac{\partial u}{\partial x_j}(x)\right)m_j(\d \xi), \;\;\; t>0,
\end{align*}
$u(0,x) =f(x)$, where $e_j, j=1, \dots, d$,  is the canonical basis of $\mathbb{R}^d$.
\section{Main result}

%{\footnotesize E. I have changed some estimates (to be checked)}  

Let $Q_tf(x)=\mathbb{E}\, f(Z^x(t))$ be the transition semigroup corresponding to the L\'evy proces   $Z^x(t)= x + Z(t)$.  The proof of the following  theorem concerning BEL formulae for $(P_t)$ and $(Q_t)$  is postponed to Section \ref{S6}.
\begin{theorem}\label{T1}
Let $P=(P_t)$ be given by $\REQ{E1}$, $\REQ{E2}$. Assume that:
\begin{itemize}
\item[(i)]  $b\in C^2(\mathbb{R}^d, \mathbb{R}^d)$ has bounded derivatives $\frac{\partial  b_i}{\partial \xi_j}$, $\frac{\partial ^2 b_i}{\partial \xi_j\partial \xi_k}$,  $i,j,k=1,\ldots, d$.
\item[(ii)] There is a $\rho >0$ such that
$$
\liminf_{\varepsilon \downarrow 0} \varepsilon ^{\rho} m_j\{|\xi |\ge \varepsilon\}\in (0,+\infty], \quad j=1,\ldots, d.
$$
\item[(iii)] There is a $\delta>0$ such that each $m_j$ restricted to the interval $(-\delta, \delta)$  is  absolutely continuous with respect to Lebesgue measure. Moreover, the density $\rho_j= \frac{\d m_j}{\d \xi}$ is of class $C^1((-\delta,\delta)\setminus \{0\})$  and  there exists a   $\kappa >1     $  such that for all $j$,
\begin{align}\label{E4}
\int_{-\delta}^{\delta} |\xi|^\kappa \rho_j(\xi)\d \xi  &<+\infty, \\ \label{E5} 
\int_{-\delta} ^{\delta} |\xi|^{2\kappa} \left( \frac{\rho'_j(\xi)}{\rho_j(\xi)}\right)^2 \rho_j(\xi)\d \xi&<+\infty
,\\
\int_{-\delta} ^{\delta} |\xi|^{2\kappa-2}  \rho_j(\xi)\d \xi&<+\infty. \label{E6}
\end{align}
\end{itemize}
Then  there are  integrable random fields $Y(t)= \left(Y_1(t), \ldots, Y_d(t)\right)$, and  $Y(t,x)= \left( Y_1(t,x),\ldots, Y_d(t,x)\right)$, $t> 0$, $x\in \mathbb{R}^d$,  such that for any $f\in B_b(\mathbb{R}^d)$, $t>0$, $x\in \mathbb{R}^d$,
$$
\nabla Q_tf(x)= \mathbb{E}\, f(Z^x(t)) Y(t)
$$
and the Bismut--Elworthy--Li formula  \eqref{E3} for $(P_t)$ holds.  Moreover,  for any $T >0$ there is an independent   of $t \in (0,T]$  and $x$ constant $C$ such that
\begin{equation}\label{E7}
\mathbb{E}\left( \vert Y(t)\vert + \vert Y(t,x)\vert \right) \le Ct^{-\frac{\kappa}{\rho}+\frac{1}{2}}.
\end{equation}
Finally, for any $T>0$ and  $\varepsilon \in (0,1/2)$ there is a constant $C_\varepsilon$ such that for all  $t \in (0,T]$  and $x$,
\begin{equation}\label{E8}
\mathbb{E}\left\vert Y(t)-Y(t,x)\right\vert  \le C_\varepsilon t^{-\frac{\kappa}{\rho}+ 3/2}.
\end{equation}
\end{theorem}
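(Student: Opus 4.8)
The plan is to run a Malliavin integration‑by‑parts scheme on the family of Poisson measures $\Pi_1,\dots,\Pi_d$, using only the ``coordinate‑axis'' perturbations of the small jumps, which are the only admissible ones here since the jumps of $Z$ lie on the axes $\mathbb R e_j$. One works on $\mathcal H=\bigoplus_{j=1}^{d}L^2([0,\infty)\times\mathbb R;\d s\,m_j(\d\xi))$ and introduces a derivative operator $D$ by differentiating a functional along the flow that shifts the small jumps of the $j$‑th coordinate by a field $v_j$ which near $0$ behaves like $|\xi|^{\kappa}\,\mathrm{sgn}\,\xi$ and is smoothly truncated inside $(-\delta,\delta)$; condition \REQ{E4} is what makes $\frac{\d}{\d\varepsilon}Z^x_\varepsilon(t)$, hence $\frac{\d}{\d\varepsilon}X^x_\varepsilon(t)$, integrable, while \REQ{E5}--\REQ{E6} are the $\xi$‑integrability conditions keeping the adjoint $\delta$ of $D$, together with the maps $u\mapsto\partial_\xi u$ and $u\mapsto u\,\rho'_j/\rho_j$, in $L^2$ in spite of the singularity of $m_j$ at the origin. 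The first step is to show that $Z^x(t)$ and $X^x(t)$ belong to the domain of $D$ and, using $b\in C^2$ with bounded second derivatives (so that $J^x$ is itself Malliavin differentiable), to a space on which the Skorokhod integrals below are defined, with
\[
D_{(s,\xi),j}X^x(t)=\mathbf{1}_{\{s<t\}}\,v_j(\xi)\,J^x(t)(J^x(s))^{-1}e_j ,
\]
where $J^x$ is the variation flow $\dot J=\nabla b(X^x)J$, $J^x(0)=I$, which is invertible with $\|J^x(t)\|+\|(J^x(t))^{-1}\|$ bounded on $[0,T]$.

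Next I would prove \REQ{E3} first for $f\in C^1_b$ and then extend it. For such $f$, $\partial_i P_tf(x)=\mathbb E\langle\nabla f(X^x(t)),J^x(t)e_i\rangle$. Since the noise is additive, the $\xi$‑averaged Malliavin covariance
\[
\sum_{j}\int_0^t\!\!\int\big(D_{(s,\xi),j}X^x(t)\big)\big(D_{(s,\xi),j}X^x(t)\big)^{*}\,\d s\,m_j(\d\xi)
=J^x(t)\Big(\int_0^t(J^x(s))^{-1}\Lambda(J^x(s))^{-*}\,\d s\Big)J^x(t)^{*},
\]
with $\Lambda=\mathrm{diag}(c_1,\dots,c_d)$, $c_j=\int v_j^2\,m_j>0$, is a.s.\ invertible by elementary ODE theory, so no random‑matrix non‑degeneracy analysis is needed. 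Taking $u^i_j(s,\xi)=\mathbf{1}_{\{s<t\}}\,\frac{1}{t\,c_j}\,v_j(\xi)\,[J^x(s)e_i]_j$, where $[\,\cdot\,]_j$ is the $j$‑th coordinate, one checks directly that $\langle D(f\circ X^x(t)),u^i\rangle_{\mathcal H}=\langle\nabla f(X^x(t)),J^x(t)e_i\rangle$, so that the duality $\mathbb E\langle DG,u\rangle_{\mathcal H}=\mathbb E[G\,\delta(u)]$ (with $G=f\circ X^x(t)$) gives $\partial_iP_tf(x)=\mathbb E[f(X^x(t))\,\delta(u^i)]$; set $Y_i(t,x):=\delta(u^i)$. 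When $b=0$ one has $J^x\equiv I$, so $u^i$ is deterministic and $Y_i(t):=\delta(u^i)$ does not depend on $x$, which yields the formula for $\nabla Q_tf$. As the weights do not involve $f$ and are integrable (by the bounds below), a bounded‑convergence/monotone‑class argument, using the $x$‑uniform bound \REQ{E7}, extends \REQ{E3} to every $f\in B_b(\mathbb R^d)$, and in particular shows $P_tf\in C^1$.

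It remains to prove \REQ{E7} and \REQ{E8}. For the $b=0$ weight one uses $\mathbb E|Y(t)|\le(\mathbb E|\delta(u^i)|^2)^{1/2}$ together with the $L^2$‑bound for the divergence, schematically $\mathbb E|\delta(u)|^2\le C\big(\|u\|_{L^2(\Omega;\mathcal H)}^2+\|Du\|_{L^2(\Omega;\mathcal H\otimes\mathcal H)}^2+\sum_j\int\!\!\int|u_j|^2(\rho'_j/\rho_j)^2\,\d s\,m_j+\dots\big)$; inserting $u^i$, the integration over $s\in[0,t]$ supplies a factor $t$, the normalizations $1/c_j$ the rest, and the remaining $\xi$‑integrals are finite precisely by \REQ{E4}--\REQ{E6}. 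To reach the exponent $-\kappa/\rho+\tfrac12$ (and, with the extra work of Section~8, the sharp power) one must in addition truncate the perturbation field to jumps of size at most $\varrho(t)$ with $\varrho(t)\downarrow0$ at the natural scale of the small jumps, so that the constants become $t$‑dependent, $c_j=c_j(t)$, and degenerate in a controlled way; here assumption~(ii), i.e.\ $m_j\{|\xi|\ge\varepsilon\}\gtrsim\varepsilon^{-\rho}$, enters quantitatively, and optimizing $\varrho(t)$ produces the stated power. For \REQ{E8} one compares $Y_i(t,x)=\delta(u^i_{[b]})$ with $Y_i(t)=\delta(u^i_{[0]})$: from $J^x(t)=I+\int_0^t\nabla b(X^x(s))J^x(s)\,\d s$ and $\|\nabla b\|_\infty<\infty$ one gets $\|J^x(t)-I\|+\|(J^x(t))^{-1}-I\|\le Ct$ uniformly in $x$ and $t\le T$, and likewise for the Malliavin derivatives of $J^x$; hence $u^i_{[b]}-u^i_{[0]}$ is $O(t)$ in each relevant norm, and continuity of $\delta$ together with the $b=0$ estimate (which bounds the remaining factor by $t^{-\kappa/\rho+1/2}$) give $\mathbb E|Y_i(t,x)-Y_i(t)|\le C\,t^{-\kappa/\rho+3/2}$. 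Finally \REQ{E7} for $Y(t,x)$ follows by the triangle inequality, using $t^{-\kappa/\rho+3/2}\le T\,t^{-\kappa/\rho+1/2}$ on $(0,T]$.

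The hard part is the last step, and within it the $b=0$ bound (the ``second step'' in the authors' terminology): obtaining the correct power of $t$ forces a delicate balance between the truncation scale $\varrho(t)$ and two competing divergences in $\|\delta(u^i)\|$, one as $\xi\to0$ and one at the truncation threshold, with all constants kept uniform in $x$. The comparison bound \REQ{E8} is likewise subtle, since the single gained factor $t$ has to be pushed through the divergence — through $u$, through $Du$ and through the density‑ratio terms at once — again uniformly in $x$. A more elementary but equally essential preliminary point, already stressed in the introduction, is contained in the first step: pinning down the right admissible class of perturbation fields (coordinate directions only, vanishing like $|\xi|^\kappa$ at the origin, at the exact rate forced by \REQ{E4}--\REQ{E6}) leaves very little freedom, and this is the principal difference from the case of a L\'evy measure with a density.
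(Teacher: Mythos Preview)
The central gap is in the integration‑by‑parts framework itself. You treat the Poisson setting as if it admitted a Gaussian‑style Malliavin calculus with an $\mathcal H$‑valued derivative $D_{(s,\xi),j}$ satisfying simultaneously (a) the chain rule $D(f\circ X)=\nabla f(X)\,DX$, (b) the kernel formula $D_{(s,\xi),j}X^x(t)=\mathbf 1_{\{s<t\}}v_j(\xi)J^x(t)(J^x(s))^{-1}e_j$, and (c) the duality $\mathbb E\langle DG,u\rangle_{\mathcal H}=\mathbb E[G\,\delta(u)]$ with the pairing taken against the \emph{intensity} $\d s\,m_j(\d\xi)$. No such calculus exists on Poisson space: the finite‑difference (chaos) derivative satisfies (c) but not (a), while the Bismut/Bass--Cranston/Norris flow‑perturbation derivative satisfies (a) but its natural pairing is against the \emph{Poisson measure} $\Pi_j$ itself (a sum over actual jumps), not the intensity. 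A one‑line check in the case $b=0$ exposes the issue: your formula gives $\langle DZ_j(t),u\rangle_{\mathcal H}=\int_0^t\!\int v_j(\xi)u_j(s,\xi)\,\d s\,m_j(\d\xi)$, a deterministic number, whereas the genuine directional derivative of $Z_j(t)$ along $v_j$ is $\int_0^t\!\int v_j(\xi)\,\Pi_j(\d s,\d\xi)$, a random variable. Hence your $u^i$, with the \emph{deterministic} normaliser $c_j=\int v_j^2\,m_j$, cannot produce the identity $\langle D(f\circ X^x(t)),u^i\rangle_{\mathcal H}=\langle\nabla f(X^x(t)),J^x(t)e_i\rangle$ under any IPP that is actually available. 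This is essentially the pitfall the authors flag in their comment on \cite{BHR}: an undue application of the chain rule.

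The paper's route is forced precisely by this obstruction. One works with $d$ \emph{scalar} derivatives $D_1,\dots,D_d$ along fixed fields $V_k$, forms the random $d\times d$ Malliavin matrix $\mathbb D X^x(t)=[D_kX^x_i(t)]$, and must invert it; your claim that ``no random‑matrix non‑degeneracy analysis is needed'' is exactly backwards. The weight is $Y_j(t,x)=\sum_k\big[A_{k,j}(t,x)D_k^*\mathbf 1(t)-D_kA_{k,j}(t,x)\big]$ with $A=(\mathbb D X^x)^{-1}\nabla X^x$, and the core technical inputs are Lemma~\ref{L6} and Corollary~\ref{C1}, i.e.\ the negative‑moment bound $\mathbb E[(Z^V_{jj}(t))^{-q}]\le Ct^{-\kappa q/\rho}$ (this is where hypothesis~(ii) enters). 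The exponent $-\kappa/\rho+\tfrac12$ in \REQ{E7} then comes from Cauchy--Schwarz between $\|(Z^V_{jj}(t))^{-1}\|_{L^2}\lesssim t^{-\kappa/\rho}$ and $\|D_k^*\mathbf 1(t)\|_{L^2}\lesssim t^{1/2}$ --- not from a time‑dependent truncation $\varrho(t)$ of the perturbation field as you propose. Similarly, \REQ{E8} is not obtained by bounding $\delta(u^i_{[b]})-\delta(u^i_{[0]})$ in an $L^2$‑divergence norm; the paper writes $A(t,x)=(Z^V(t))^{-1}\sum_{n\ge0}(-Q(t,x))^n$ with the \emph{pathwise} bound $\|Q(t,x)\|\le ct$, and then carefully orders the matrix factors in $A\,(D_kM)\,A-(Z^V)^{-1}(D_kZ^V)(Z^V)^{-1}$ so that the gained factor $t$ is not destroyed by a stray $(Z^V)^{-1}$ on the wrong side (cf.\ Remark~\ref{comm}).
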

\begin{remark}
{\rm  Note that, what is expected,  the rate $-\frac{\kappa}{\rho}+ 3/2$ depends only on the small jumps of $Z$. } 
\end{remark}
\begin{remark}\label{R1}
{\rm In fact we have formulae for the fields  appearing in Theorem \ref{T1}. Namely,
\begin{equation}\label{E9}
\begin{aligned}
Y_j (t)&=  \sum_{k=1}^d \left[ A_{k,j}(t) D_{k}^*\mathbf{1}(t)-D_{k}A_{k,j}(t)\right],\\
Y_j (t,x)&=  \sum_{k=1}^d \left[ A_{k,j}(t,x) D_{k}^*\mathbf{1}(t)-D_{k}A_{k,j}(t,x)\right],
\end{aligned}
\end{equation}
where:
\begin{itemize}
\item the matrix-valued random fields  $A(t)= \left[ \mathbb{D} Z(t)\right] ^{-1}$ and $A(t,x)=[A_{k,j}(t,x)]\in M(d\times d)$ are  given by
\begin{equation}\label{E10}
\begin{aligned}
A(t)&= \left[ \mathbb{D} Z(t)\right] ^{-1}, \\
A(t,x)&= \left[ \mathbb{D} X^x(t)\right] ^{-1}\nabla X^x(t), \;\;\;\; \mathbb{P-}{\text a.s},
\end{aligned}
\end{equation}
\item    $\mathbb{D} Z(t)$ and $\mathbb{D} X^x(t)$ are the Malliavin derivatives (see Section \ref{S3} and formulae \eqref{E18} and \eqref{23}) of $Z(t)$ and  $X^x(t)$ respectively,  with respect to the field $V= (V_1,\ldots, V_d)$,
\begin{equation}\label{E11}
V_j(t,\xi)= \phi_{\delta}(\xi_j)\psi_{{\delta}}(t) = V_j(t,\xi_j),
\end{equation}
$\psi_{{\delta}}\in C^\infty(\mathbb{R})$  and $\phi_\delta \in C^\infty(\mathbb{R}\setminus\{0\})$    are non-negative functions   such that
\begin{equation}\label{E12}
\psi_{{\delta}}(r)=\begin{cases}
0&\text{if $|r|\ge {\delta}$,}\\
1&\text{if $|r|\le {\delta}/2$}
\end{cases}, 
\qquad
\phi_\delta(r)= |r|^{\kappa}\psi_{\delta}(r),
\end{equation}
 with constants $ {\delta}>0$ and   $\kappa$ appearing in assumption $(iii)$ of Theorem \ref{T1}, 
  \item $\nabla X^x(t)$ is the derivative in probability  of $X^x$ with respect to the initial condition $x$,
\item $D_{k}^*\mathbf{1}(t)$ is the adjoint derivative operator calculated on the constant function $\mathbf{1}$, see Section \ref{S3}, Lemma \ref{L3}. We note that the matrix $A(t)$ is diagonal with entries
$$
\left( \int_0^t \int_{\mathbb{R}} V_j(s,\xi_j)\Pi_j(\d s,\d \xi _j)\right)^{-1}.
$$
%{ The fact that, $\mathbb {P}$-a.s.,   $\int_0^t \int_{\mathbb{R}} V_j(s,\xi_j)\Pi_j(\d s,\d \xi _j) >0$, $j =1, \ldots, d,$ follows by Corollary 1 in Section 7.
% }
 \end{itemize}
}
\end{remark}

\begin{remark}\label{R4} {\rm  The fields $Y(t)$ and   $Y(t,x)$ are  not uniquely determined by the BEL formulae. However their conditional expectations $\mathbb{E}\left( Y(t)\vert Z^x(t)\right)$  and  $\mathbb{E}\left( Y(t,x)\vert X^x(t)\right)$ are  uniquely determined. On the other hand, $\mathbb{E}\left\vert Y(t)\right\vert$ and $\mathbb{E}\left\vert Y(t,x)\right\vert$ may depend on the choice of the fields.
}
\end{remark}

Estimate \eqref{E7} implies   new  uniform gradient estimates
\begin{equation}
 \label{grad1}
\left \| \nabla P_t f\right\| _\infty   \le C_{T, \epsilon}  t^{-\frac{\kappa}{\rho}+ 1/2} \left\| f
 \right\| _\infty, \qquad  t \in (0,T],\ \  f \in B_b({\mathbb R}^d).
\end{equation}
Although   \eqref{grad1} is quite general, it is not sharp in the relevant cylindrical  $\alpha$-stable case with  $\alpha \in (0,2)$.
In such  case  $\rho =\alpha$ and $\kappa$ is any real number satisfying $\kappa >  1+ \frac \alpha 2$. Therefore we only get
 that for any $\varepsilon>0$ and $T<+\infty$ there is a constant $C_{\varepsilon,T}$ such that f or any $f\in B_b(\mathbb{R}^d)$,
\begin{gather}\label{ww}  
\left \| \nabla P_t f\right\| _\infty   \le C_{\varepsilon,T} t^{-\frac{1}{\alpha}  -\varepsilon } \left\| f \right\| _\infty, \qquad t \in (0,T]. 
\end{gather}
%It seems that  when $0 < \alpha \le 1$  these non-sharp estimates are  new.
  We will improve the previous estimate in Section \ref{vv} by considering $\varepsilon =0$. To this purpose we will also use the next remark.
 
\begin{remark}\label{qq}
%{\rm {\footnotesize E. this remark is still as in the previous version} 
 {\rm 
 Our main theorem provides also estimate  \REQ{E8} for  $\mathbb{E}\left\vert Y(t,x)- Y(t)\right\vert$.
 This can be  useful. Indeed if for some  specific   L\'evy processes $Z_j$ we have
\begin{equation}\label{s22}
\mathbb{E}\left\vert Y(t)\right\vert \le C_Tt^{-\eta}, \qquad  t \in (0,T]
\end{equation}
or even if $\mathbb{E}\left\vert \mathbb{E}\left( Y(t)\vert X^x(t)\right) \right\vert \le C_Tt^{-\eta}$ for some $\eta$ such that
$$
\frac {\kappa}{\rho} - 1 < \eta \le \frac {\kappa}{\rho}-\frac 12,
$$
 where $\kappa $ verifies our assumptions,
then  we can improve \eqref{E7} and get, for $t \in (0,T]$,
\begin{equation}\label{E71}
\mathbb{E} \vert Y(t,x)\vert  \le C_T't^{-\eta},\qquad t \in (0,T].
\end{equation}
By \eqref{E71} one deduces
$$
\left \| \nabla P_t f\right\| _\infty   \le C_T' t^{-\eta } \left\| f \right\| _\infty, \qquad t \in (0,T].
$$
In particular when $Z_j$ are independent real $\alpha$-stable processes we will get in Section \ref{vv} the crucial estimate
\begin{equation}\label{E7112}
\mathbb{E} \left\vert Y(t)\right\vert  \le C _Tt^{-\frac{1}{\alpha}}, \qquad t \in (0,T].
\end{equation}
Combining \eqref{E8} with \eqref{E7112} we deduce  in the cylindrical $\alpha-$stable case
\begin{equation}\label{E711}
\mathbb{E} \left\vert Y(t,x)\right\vert  \le C_T't^{-\frac{1}{\alpha}}, \qquad t \in (0,T],
\end{equation}
(where $C'_{T}$ is independent of $x$ and $t$)
 and the sharp gradient estimate
\begin{gather*}
\left \| \nabla P_t f\right\| _\infty   \le C_T' t^{-\frac{1}{\alpha} } \left\| f \right\| _\infty, \qquad  t \in (0,T],\ \  \alpha \in (0,2).
\end{gather*}
%{  Finally note that a standard approximation argument }
}
\end{remark}

\bigskip As mentioned in the introduction a difficulty of the proof of   Theorem  \ref{T1}   is also to show that the Malliavin derivative of the solution $\mathbb{D} (X^x(t))$ in the direction to a suitable random field $V$ is invertible and the inverse is integrable with sufficiently large power. The idea (see the proof of our Lemma \ref{L5}) is to show that $\mathbb{D} (X^x(t))\approx Z^V(t)$,  where $Z^V(t)$ is a diagonal matrix with the terms $\int_0^t \int_{\mathbb{R}^d} V_j(s,\xi)\Pi_j(\d s ,\d \xi)$ on diagonal. Therefore the integrability of $\left(\mathbb{D} (X^x(t))\right)^{-1}$ follows from the known fact, see Section \ref{S5} that
$$
\mathbb{E}\left[ \int_0^t \int_{\mathbb{R}}\psi(\xi) \Pi_j(\d s,\d \xi)\right]^{-q}\le C(q,T)\, t^{-\frac{\kappa q}{\rho}}, \qquad \forall \, q\in (1,+\infty).
$$
 On the other hand, 
 several technical difficulties arise in proving  the   sharp  bounds for   
% \begin{equation} \label{s23}
 $ \mathbb{E}\left\vert Y(t)-Y(t,x)\right\vert$    and 
 $\E |Y(t)|$.
 %(see in particular Sections 6.2 and 8). 

Finally, we mention that an attempt to prove \eqref{E3} has been done in \cite{BHR} by the martingale approach used in   \cite{Takeuchi}  (see, in particular, Lemma A.3 in \cite{BHR}). However the  BEL formula in   \cite{BHR}  does not seem to be  correct,  since  there is  a gap in the proof, passing  from formula (48) to (49) in page 1450 of \cite{BHR}, which consists in an undue application of the chain rule. It seems that the complication here is substantial, and it is difficult to  adapt directly the approach used in \cite{Takeuchi} to the current setting, where because of singularity of the noise it is hard to guarantee invertibility of the Malliavin derivative w.r.t. one vector field. Exactly this crucial point is our reason to use a matrix-valued Malliavin derivative of the solution w.r.t. a vector-valued field $V= (V_1,\ldots, V_d)$.

\section{Malliavin calculus}\label{S3}

%{\footnotesize E. this section is as before}  

In this section we adopt in a very direct way the classical concepts and results of Bass and Cranston \cite{Bass-Cranston} and Norris \cite{Norris} to the case of  $Z=(Z_1,\ldots, Z_d)^*$ being  a  L\'evy process in $\mathbb{R}^d$ with independent   coordinates $Z_j$.  For more information on Malliavin calculus for jump processes we refer the reader to the book of Ishikawa \cite{Ishikawa} (see also \cite{BC2011} and the references therein).

We assume that $Z=\left(Z_1,\ldots, Z_d\right)^*$ is defined on a probability space $(\Omega,\mathcal{F},\mathbb{P})$. By the L\'evy--It\^o decomposition
$$
Z(t)= \int_0^t \int_{\mathbb{R}^d} \xi\, \overline {\Pi}(\d s, \d \xi),
$$
where   $\Pi$ be the Poisson random measure on $E:= [0,+\infty)\times \mathbb{R}^d$  with intensity measure $\d t\mu(\d \xi)$,
\begin{align*}
\overline{\Pi}(\d s ,\d \xi)&:= \widehat {\Pi}(\d s ,\d \xi) \chi_{\{|\xi |\le 1\}}+ {\Pi}(\d s ,\d \xi)\chi_{\{|\xi |> 1\}},\\
\widehat {\Pi}(\d s ,\d \xi)&:= \Pi(\d s,\d \xi)-\mu(\d \xi)\d s.
\end{align*}
Moreover, as the coordinates of $Z$ are independent,
\begin{equation}\label{E13}
\begin{aligned}
\mu(\d \xi) &:= \sum_{j=1}^d \mu_j(\d \xi),\\
\mu_j(\d \xi) &:= \delta_0(\d \xi_1)\ldots \delta_0(\d \xi_{j-1})m_j(\d \xi _j) \delta_0(\d \xi _{j+1})\ldots \delta_0(\d \xi_d),
\end{aligned}
\end{equation}
where $\delta_0$ is the Dirac $\delta$-function, and $m_j(\d \xi _j)$ is the L\'evy measure of $Z_j$. Note that
$$
\Pi(\d t,\d \xi)=  \sum_{j=1}^d \Pi_j(\d t,\d \xi),
$$
where $\Pi _j$ are independent Poisson random measures each on  $[0,+\infty)\times  \mathbb{R}^d$   with the intensity measure $\mu_j$  (we use the same symbol as for the one-dimensional $\Pi_j(\d s, \d \xi)$ appearing in \eqref{sw}  when no confusion may arise).

Consider the filtration
$$
\mathfrak{F}_t={\delta} \left(\Pi([0,s]\times \Gamma)\colon 0\le s\le t,\ \Gamma \in \mathcal{B}(\mathbb{R}^d)\right), \qquad t\ge 0.
$$

Poisson random field $\Pi$ can be treated as a random element in the space $\mathbb{Z}_+(E)$ of integer-valued  measures  on $(E,\mathcal{B})$ with the {${\sigma}$-field} $\mathcal{G}$ generated by the family of functions
$$
\mathbb{Z}_+(E)\ni \nu \mapsto \nu(A)\in \{0,1,2,\ldots, +\infty\}, \qquad A\in \mathcal{B}.
$$
\begin{definition}
{\rm Let $p\in (0,+\infty)$. We call a random variable $\Psi\colon \Omega\mapsto \mathbb{R}$ an $L^p$-functional of $\Pi$ if there is a sequence of bounded measurable functions $\varphi _n\colon \mathbb{Z}_+(E)\mapsto \mathbb{R}$ such that
\begin{equation}\label{E14}
\lim_{n\to +\infty} \mathbb{E}\left \vert \Psi -\varphi_n(\Pi)\right\vert ^p=0.
\end{equation}
A random variable $\Psi\colon \Omega\mapsto \mathbb{R}$ is called an $L^0$-functional of $\Pi$ if, instead of \eqref{E14}, the convergence in probability holds
\begin{equation}\label{E15}
\varphi_n(\Pi) \stackrel{(\mathbb{P})}{\to}  \Psi.
\end{equation}
}
\end{definition}
The space of all $L^p$-functionals of $\Pi$ is  denoted by $L^p(\Pi)$. Note that for $p\geq 1$,  $L^p(\Pi)$ is a Banach space with the norm
$\|\Psi\|_{L_p}= \Big(\mathbb{E}\left \vert \Psi \right\vert ^p\Big)^{1/p}$,  and for $p\in (0,1)$,  $L^p(\Pi)$ is a Polish space with the metric $\rho_{L_p}(\Phi, \Psi)=\mathbb{E}\left \vert \Phi-\Psi \right\vert ^p$.

 Assume now that $V=(V_1,\ldots, V_d)\colon [0,+\infty)\times \mathbb{R}^d\mapsto \mathbb{R}^d$ is a random field given by \REQ{E11} and \REQ{E12}. The parameter  ${\delta}$ appearing in \REQ{E12}  will be specified later. Define transformations $\mathcal{Q}^\varepsilon _k$, $\varepsilon >0$ and $k=1,\ldots,d$, $\mathcal{Q}^{\varepsilon}_k\colon \mathbb{Z}_+(E)\mapsto \mathbb{Z}_+(E)$ as follows
$$
\mathcal{Q}^{\varepsilon}_k \left( \sum_{j}\delta_{\tau^j , \xi^j}\right) = \sum_{j}\delta_{\tau^j, \xi^j +  \varepsilon V_k(\tau^j, \xi^j_k)e_k},
$$
where $e_k, k=1, \dots, d$,  is the canonical basis of $\mathbb{R}^d$.

Now let  $\Psi\in L^0(\Pi)$. Write
$$
\mathcal{Q}^{\varepsilon}_k\Psi = (\mathbb{P})-\lim_{n\to +\infty} \varphi_n(\mathcal{Q}^{\varepsilon}_k(\Pi)),
$$
where $\varphi_n\colon \mathbb{Z}_+(E) \mapsto \mathbb{R}$ are such that \REQ{E15} holds true.  It follows  from Lemma \ref{L2} below that $\mathcal{Q}^{\varepsilon}_{k}\Psi$ is well defined.
\begin{definition}{\rm  We call $\Psi\in L^0(\Pi)$,  \emph{differentiable} { (with respect to the field $V= (V_1,\ldots, V_d)$)}  if there exist limits in probability 
 $$
 D_k\Psi=(\mathbb{P})-\lim_{\varepsilon\to 0}\frac{1}{\varepsilon}\left(\mathcal{Q}^{\varepsilon}_k(\Psi)-\Psi\right), \quad k=1,\ldots, d.
 $$}
\end{definition}
If $\Psi\in L^0(\Pi)$ is differentiable then we call 
$$
\mathbb{D}\Psi= \left(D_1\Psi, \ldots, D_d\Psi\right)
$$ 
the \emph{Malliavin derivative of $\Psi$}.

The proof of the following chain rule is standard and left to the reader.
\begin{lemma}\label{L1}
Assume that $\Psi_1, \ldots,\Psi_m$ are differentiable functionals of $\Pi$. Then for any $f\in C^1_b(\mathbb{R}^m)$ the variable $f\left( \Psi_1, \ldots,\Psi_m\right)$ is differentiable and
 \begin{equation}\label{E16}
{D}_k f\left( \Psi_1, \ldots,\Psi_m\right)= \sum_{j=1}^m \frac{\partial f}{\partial x_j}\left( \Psi_1, \ldots,\Psi_m\right) {D}_k  \Psi_j, \qquad k=1, \ldots,d.
\end{equation}
\end{lemma}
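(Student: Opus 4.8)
The plan is to reduce the chain rule to two ingredients: a commutation identity between the shift operators $\mathcal{Q}^\varepsilon_k$ and composition with bounded continuous functions, and the elementary first-order Taylor formula for $f\in C^1_b$, together with the fact that composition and multiplication are continuous operations for convergence in probability.

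\emph{Step 1 (composition and $\mathcal{Q}^\varepsilon_k$).} I would first check that $\Phi:=f(\Psi_1,\dots,\Psi_m)$ belongs to $L^0(\Pi)$ and that $\mathcal{Q}^\varepsilon_k$ passes through $f$. Fix bounded measurable $\varphi_{j,n}\colon\mathbb{Z}_+(E)\to\mathbb{R}$ with $\varphi_{j,n}(\Pi)\stackrel{(\mathbb{P})}{\to}\Psi_j$ for $j=1,\dots,m$. As $f$ is bounded and continuous, the maps $\nu\mapsto f(\varphi_{1,n}(\nu),\dots,\varphi_{m,n}(\nu))$ are bounded and measurable and $f(\varphi_{1,n}(\Pi),\dots,\varphi_{m,n}(\Pi))\stackrel{(\mathbb{P})}{\to}\Phi$; hence $\Phi\in L^0(\Pi)$, and by the definition of the action of $\mathcal{Q}^\varepsilon_k$ on $L^0$-functionals (well posed by Lemma \ref{L2}),
$$
\mathcal{Q}^\varepsilon_k\Phi=(\mathbb{P})\text{-}\lim_{n\to\infty}f\bigl(\varphi_{1,n}(\mathcal{Q}^\varepsilon_k\Pi),\dots,\varphi_{m,n}(\mathcal{Q}^\varepsilon_k\Pi)\bigr).
$$
By the same definition applied to each $\Psi_j$ one has $\varphi_{j,n}(\mathcal{Q}^\varepsilon_k\Pi)\stackrel{(\mathbb{P})}{\to}\mathcal{Q}^\varepsilon_k\Psi_j$, so continuity of $f$ yields the commutation identity
$$
\mathcal{Q}^\varepsilon_k f(\Psi_1,\dots,\Psi_m)=f\bigl(\mathcal{Q}^\varepsilon_k\Psi_1,\dots,\mathcal{Q}^\varepsilon_k\Psi_m\bigr)\qquad\mathbb{P}\text{-a.s.}
$$

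\emph{Step 2 (Taylor expansion and limit).} Writing $\Psi_j^\varepsilon:=\mathcal{Q}^\varepsilon_k\Psi_j$, $\vec\Psi:=(\Psi_1,\dots,\Psi_m)$ and $\vec\Psi^\varepsilon:=(\Psi_1^\varepsilon,\dots,\Psi_m^\varepsilon)$, Step 1 and the fundamental theorem of calculus give
$$
\frac1\varepsilon\bigl(\mathcal{Q}^\varepsilon_k\Phi-\Phi\bigr)=\sum_{j=1}^m\Bigl(\int_0^1\frac{\partial f}{\partial x_j}\bigl(\vec\Psi+\theta(\vec\Psi^\varepsilon-\vec\Psi)\bigr)\,\d \theta\Bigr)\,\frac{\Psi_j^\varepsilon-\Psi_j}{\varepsilon}.
$$
Since each $\Psi_j$ is differentiable, $\varepsilon^{-1}(\Psi_j^\varepsilon-\Psi_j)\stackrel{(\mathbb{P})}{\to}D_k\Psi_j$ as $\varepsilon\to0$, hence $\Psi_j^\varepsilon-\Psi_j\stackrel{(\mathbb{P})}{\to}0$ and $\vec\Psi+\theta(\vec\Psi^\varepsilon-\vec\Psi)\to\vec\Psi$ in probability, uniformly in $\theta\in[0,1]$. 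As $\partial f/\partial x_j$ is bounded and continuous, extracting along any subsequence an almost surely convergent one and using dominated convergence in $\theta$ shows $\int_0^1(\partial f/\partial x_j)(\vec\Psi+\theta(\vec\Psi^\varepsilon-\vec\Psi))\,\d \theta\stackrel{(\mathbb{P})}{\to}(\partial f/\partial x_j)(\vec\Psi)$; and since a product of sequences converging in probability converges in probability to the product of the limits, the right-hand side of the last display tends in probability to $\sum_{j=1}^m(\partial f/\partial x_j)(\vec\Psi)\,D_k\Psi_j$. Thus $\Phi$ is differentiable and \eqref{E16} holds.

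\emph{Main obstacle.} The only genuinely delicate point is Step 1, namely that $\mathcal{Q}^\varepsilon_k$ is well defined on $L^0(\Pi)$ and respects convergence in probability; this is precisely the content of Lemma \ref{L2}, whose proof is a quasi-invariance (Girsanov-type) statement for the Poisson random measure $\Pi$ under the shift $\nu\mapsto\nu\circ(\mathcal{Q}^\varepsilon_k)^{-1}$. That argument relies on the specific structure of $V_k$ in \eqref{E11}--\eqref{E12}: it is $C^\infty$ and supported in $\{|\xi_k|\le\delta\}$ away from the origin, so $\xi_k\mapsto\xi_k+\varepsilon V_k(\cdot,\xi_k)$ is a diffeomorphism of $\mathbb{R}$ for $\varepsilon$ small, together with the regularity of the densities $\rho_j$ near $0$ assumed in $(iii)$ of Theorem \ref{T1}. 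Everything else — continuity of composition and multiplication for convergence in probability, and the Taylor formula for $C^1$ maps — is routine, which is why the lemma is stated as standard.
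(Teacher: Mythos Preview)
Your argument is correct and is exactly the standard route the paper has in mind: the paper does not give a proof at all, stating only that ``the proof of the following chain rule is standard and left to the reader.'' Your Step~1 (commutation of $\mathcal{Q}^\varepsilon_k$ with composition, justified via Lemma~\ref{L2}) and Step~2 (first-order Taylor expansion plus stability of probability limits under bounded continuous maps and products) constitute precisely the routine verification the authors omit.
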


Let $\rho_k=\frac{\d m_k}{\d x}$ be the density of the L\'evy measure $m_k$ restricted to $(-\delta,\delta)\setminus\{0\} \subset \mathbb{R}$.  Given $\varepsilon \in [-1,1]$   and $k=1,\ldots, d$, define
\begin{align*}
\lambda ^{\varepsilon}_k(t,\xi_k)&:=
\begin{cases}\left(  1 +\varepsilon \frac{\d  V_k}{\d \xi_k}(t,\xi_k)\right) \frac{\rho_k(\xi_k+\varepsilon V_k(t,\xi_k))}{\rho_k(\xi_k)}&, \text{if $\xi_k\in (-\delta,\delta)\setminus \{0\}$,}\\
1&, \text{otherwise},
\end{cases}
\\
  \Lambda^\varepsilon _{k}(t,\xi_k)&:=  \lambda ^{\varepsilon }_k  (t, \xi_k)-1-  \log \lambda ^{\varepsilon }_k (t, \xi_k),
\end{align*}
and
\begin{gather*}
M^{\varepsilon }_k(t):= \exp\Big\{ \int_0^t \int_{\mathbb{R}^d} \log \lambda ^{\varepsilon }_k (s, \xi_k)\widehat{\Pi}_k(\d s,\d \xi)    -  \int_0^t \int_{\mathbb{R}^d} \Lambda ^{\varepsilon }_k  (s, \xi_k) \mu_k(\d \xi) \d s \Big \},
\end{gather*}
where $\mu_k$ is defined in \eqref{E13} and
$$
\widehat\Pi_k ( \d s, \d \xi):=\Pi_k(\d s, \d \xi)-  \mu_k(\d \xi)\d s.
$$

We will need the following result (see e.g. \cite{Norris} or \cite{Ivanenko-Kulik}).
\begin{lemma}\label{L2}
The process $M^{\varepsilon }_k $  is a martingale and for all $T\ge 0$, and  $r\in \mathbb{R}$,  $\mathbb{E} \left[ M^{\varepsilon}_k(T)\right]^r<+\infty$. Let $T\in (0,+\infty)$. Then, under the probability $\d \mathbb{P}^\varepsilon = M^{\varepsilon }_k(T)\d \mathbb{P}$,  $\mathcal{Q}^{\varepsilon }_k(\Pi)$  restricted to $[0,T]\times \mathbb{R}^d$  is a Poisson random measure with intensity ${ \mu_k (\d \xi)\d s }$.
\end{lemma}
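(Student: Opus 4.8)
The plan is to recognize $M^\varepsilon_k$ as the Dol\'eans--Dade exponential of a compensated Poisson integral and to funnel the entire statement through one quantitative input, the finiteness of $\int_0^T\!\int(\lambda^\varepsilon_k-1)^2\mu_k(\d\xi)\,\d s$. First I would set $L_t=\int_0^t\!\int(\lambda^\varepsilon_k-1)\,\widehat\Pi_k(\d s,\d\xi)$ and check, using \eqref{E11}--\eqref{E12}, that $\lambda^\varepsilon_k-1$ vanishes off the bounded region $\{s\le\delta,\ 0<|\xi_k|<\delta\}$, so that $L$ is a purely discontinuous $L^2$-martingale as soon as the above integral is finite. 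The exponential/product formula then identifies $M^\varepsilon_k$ with $\mathcal E(L)$, i.e.
$$
M^\varepsilon_k(t)=\e^{L_t}\prod_{s\le t}(1+\Delta L_s)\,\e^{-\Delta L_s},
$$
the product converging absolutely because $\sum_{s\le t}(\Delta L_s)^2\le\int_0^t\!\int(\lambda^\varepsilon_k-1)^2\,\Pi_k(\d s,\d\xi)<\infty$ a.s.; equivalently $M^\varepsilon_k$ solves $\d M^\varepsilon_k=M^\varepsilon_{k,-}(\lambda^\varepsilon_k-1)\,\d\widehat\Pi_k$ and is a nonnegative local martingale. A short algebraic check shows this $\mathcal E(L)$ coincides with the compensated form of $M^\varepsilon_k$ written in the text.

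The technical heart, and the step I expect to be the main obstacle, is to establish the integrability bounds from assumption $(iii)$. Writing $\log\lambda^\varepsilon_k=\log\!\big(1+\varepsilon\tfrac{\d V_k}{\d\xi_k}\big)+\big(\log\rho_k(\xi_k+\varepsilon V_k)-\log\rho_k(\xi_k)\big)$ and applying the mean value theorem to the second bracket, one obtains a pointwise bound of the form $|\lambda^\varepsilon_k-1|\le C\,|\varepsilon|\big(|V_k'|+|V_k|\,|\rho_k'/\rho_k|(\eta)\big)$ for an intermediate point $\eta$. Since $V_k(s,\xi_k)=|\xi_k|^\kappa\psi_\delta(\xi_k)\psi_\delta(s)$ with $\kappa>1$, the displacement $\varepsilon V_k$ is $O(|\xi_k|^\kappa)$, so for $\delta$ small and $|\varepsilon|\le1$ the shift $\xi_k\mapsto\xi_k+\varepsilon V_k$ is a diffeomorphism of $(-\delta,\delta)\setminus\{0\}$ that does not cross the origin and keeps $\eta$ comparable to $\xi_k$; this is exactly where $\kappa>1$ and the freedom in $\delta$ enter. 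Squaring and integrating against $\mu_k(\d\xi)=m_k(\d\xi_k)$, the term $(V_k')^2$ produces $\int|\xi_k|^{2\kappa-2}\rho_k\,\d\xi_k$ and $V_k^2(\rho_k'/\rho_k)^2$ produces $\int|\xi_k|^{2\kappa}(\rho_k'/\rho_k)^2\rho_k\,\d\xi_k$, so finiteness of $\int(\lambda^\varepsilon_k-1)^2\mu_k$ is precisely \eqref{E5}--\eqref{E6}; the same bounds show $\log\lambda^\varepsilon_k$ is bounded (hence $\lambda^\varepsilon_k$ stays between two positive constants) and that $\Lambda^\varepsilon_k=\lambda^\varepsilon_k-1-\log\lambda^\varepsilon_k=O((\lambda^\varepsilon_k-1)^2)$ is $\mu_k$-integrable, while \eqref{E4} ensures $V_k$ itself is $\mu_k$-integrable. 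The delicate point throughout is the behaviour near $\xi_k=0$, where $\rho_k$ and $\rho_k'/\rho_k$ may blow up but are tamed by the $|\xi_k|^\kappa$ weight.

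With these bounds the martingale property and the moment estimates follow by a truncation argument that avoids circularity. Replacing $\lambda^\varepsilon_k$ by its restriction $\lambda^{\varepsilon}_{k,n}$ to $\{|\xi_k|>1/n\}$, the measure $\mu_k$ is finite there, so $M^\varepsilon_{k,n}$ is an elementary (compound-Poisson) Esscher exponential, a genuine martingale with $\mathbb E\,M^\varepsilon_{k,n}(t)=1$; for such exponentials the identity
$$
\mathbb E\big[M^\varepsilon_{k,n}(t)\big]^r=\exp\Big\{\int_0^t\!\!\int\big((\lambda^{\varepsilon}_{k,n})^r-1-r(\lambda^{\varepsilon}_{k,n}-1)\big)\mu_k(\d\xi)\,\d s\Big\}
$$
is standard. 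Letting $n\to\infty$, the $L^2$-bound coming from $\int(\lambda^\varepsilon_k-1)^2\mu_k<\infty$ gives $M^\varepsilon_{k,n}(t)\to M^\varepsilon_k(t)$ in $L^1$, whence $\mathbb E\,M^\varepsilon_k(t)=1$; a nonnegative local martingale with constant expectation is a true martingale. Passing to the limit in the displayed identity (monotone convergence in the exponent by convexity of $x\mapsto x^r$, together with the two-sided bound on $\lambda^\varepsilon_k$ for the negative powers) yields $\mathbb E[M^\varepsilon_k(T)]^r<\infty$ for every $r\in\mathbb R$.

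Finally, the identification of the law under $\mathbb P^\varepsilon$ is a Laplace-functional computation. Since $V_k(s,\cdot)$ vanishes at $0$, the map $\mathcal Q^\varepsilon_k$ fixes every atom off the $k$-th axis and shifts the $k$-axis atoms within that axis, so it suffices to analyse $\Pi_k$. For bounded $f$ supported in $\{0<|\xi_k|<\delta\}$ I would compute, with $f^\varepsilon(s,\xi):=f(s,\xi+\varepsilon V_k(s,\xi_k)e_k)$,
$$
\mathbb E^{\mathbb P^\varepsilon}\Big[\e^{-\int f\,\d\mathcal Q^\varepsilon_k(\Pi)}\Big]=\mathbb E\Big[M^\varepsilon_k(T)\,\e^{-\int f^\varepsilon\,\d\Pi_k}\Big]=\exp\Big\{\int\lambda^\varepsilon_k(\e^{-f^\varepsilon}-1)\,\mu_k\,\d s\Big\},
$$
where the last equality uses the exponential formula in compensated form (its finiteness condition being $\int\Lambda^\varepsilon_k\mu_k<\infty$ near the origin and finite intensity away from it). Because $\lambda^\varepsilon_k$ is exactly the Jacobian of $\xi_k\mapsto\xi_k+\varepsilon V_k$ times the density ratio $\rho_k(\xi_k+\varepsilon V_k)/\rho_k(\xi_k)$, the substitution $\eta_k=\xi_k+\varepsilon V_k(s,\xi_k)$ turns the right-hand exponent into $\int(\e^{-f}-1)\mu_k\,\d s$, the Laplace functional of a Poisson random measure with intensity $\mu_k(\d\xi)\,\d s$; a monotone-class argument removes the restriction on $f$, and independence of the untouched components (which do not enter $M^\varepsilon_k$) shows their laws are preserved. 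This yields the claimed distribution of $\mathcal Q^\varepsilon_k(\Pi)$ under $\mathbb P^\varepsilon$.
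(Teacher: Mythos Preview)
The paper does not give its own proof of this lemma; the authors simply cite Norris and Ivanenko--Kulik and use the statement as a black box. Your proposal is a faithful write-up of the argument one finds in those references: identify $M^\varepsilon_k$ as the Dol\'eans--Dade exponential $\mathcal E(L)$ with $L_t=\int_0^t\!\int(\lambda^\varepsilon_k-1)\,\widehat\Pi_k$, obtain the true-martingale property and the moment identity via a compound-Poisson truncation, and read off the law of $\mathcal Q^\varepsilon_k(\Pi)$ under $\mathbb P^\varepsilon$ from a Laplace-functional change of variables in which $\lambda^\varepsilon_k$ is precisely the Jacobian times the density ratio. The overall structure is correct and is exactly what the cited sources do.

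One step deserves more care than you give it. Your mean-value estimate produces $|\rho_k'/\rho_k|$ evaluated at an \emph{intermediate} point $\eta$, and you then claim that squaring and integrating against $\rho_k(\xi_k)\,\d\xi_k$ reduces to \eqref{E5}--\eqref{E6}. This reduction implicitly uses that $\rho_k(\xi_k)$ and $\rho_k(\eta)$ are comparable (equivalently, that $|\log\lambda^\varepsilon_k|$ is uniformly bounded), and that comparability is \emph{not} a consequence of the integral hypotheses \eqref{E4}--\eqref{E6} alone: those conditions do not prevent $\rho_k'/\rho_k$ from being badly behaved on small sets, which is enough to spoil a pointwise mean-value argument. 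In Norris's framework this two-sided bound on $\lambda^\varepsilon$ is part of the standing assumptions on the perturbation field, and in the present paper every concrete application satisfies the stronger pointwise condition \eqref{wq} of Remark~\ref{s11}, under which your argument goes through cleanly. For the lemma exactly as stated, with only assumption (iii), you would either need to strengthen the hypothesis slightly (a pointwise bound of the type $|\rho_k'/\rho_k|\le C|\xi_k|^{-1}$ near $0$ suffices) or argue the $L^2$-bound on $\lambda^\varepsilon_k-1$ by a different route; this is presumably one reason the authors defer to the references rather than give a self-contained proof.
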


The following lemma provides an integration by parts formula for the derivative $D_k$.  For the completeness we repeat some elements of a proof from \cite{Norris}.

\begin{lemma}\label{L3}
For any  $1 \le q\le 2$ and $t\in (0,+\infty)$,  the random variable
\begin{equation}\label{E17}
D_{k}^*\mathbf{1}(t):= - { \int_0^t \int_{(-\delta,\delta) \times {\mathbb R}^{d-1} } } \frac{\frac {\d }{\d  \xi_k} \left(V_k(s,\xi_k)\rho_k(\xi _k)\right)} {\rho_k(\xi _k)}\widehat\Pi_k ( \d s, \d \xi)
\end{equation}
is $q$-integrable. Assume that  { $p \ge 2$ }  and that ${ \Phi}  \in L^p(\Pi)$ is differentiable and  $\mathfrak{F}_t$-measurable. Then $
\mathbb{E} D_k\Phi = \mathbb{E} \Phi D_k^*\mathbf{1}(t)$.
\end{lemma}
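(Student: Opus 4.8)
The plan is to reproduce, one coordinate $k$ at a time, the Girsanov-type integration-by-parts argument of \cite{Norris}, \cite{Bass-Cranston}, using that the shift $\mathcal{Q}_k^\varepsilon$ moves only the atoms of $\Pi_k$: an atom of $\Pi_l$ with $l\ne k$ lies on the $\xi_l$-axis, so its $k$-th coordinate is $0$ and it is displaced by $\varepsilon V_k(\tau,0)e_k=\varepsilon\psi_\delta(\tau)\phi_\delta(0)e_k=0$. First, for the $q$-integrability, put $g_k(s,\xi_k)=\frac{\d V_k}{\d\xi_k}(s,\xi_k)+V_k(s,\xi_k)\frac{\rho_k'(\xi_k)}{\rho_k(\xi_k)}$, so that $D_k^*\mathbf 1(t)=-\int_0^t\int_{(-\delta,\delta)\times\mathbb R^{d-1}}g_k(s,\xi_k)\,\widehat\Pi_k(\d s,\d\xi)$ is a compensated Poisson integral over small jumps only. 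Since $V_k(s,\xi_k)=\psi_\delta(s)|\xi_k|^\kappa\psi_\delta(\xi_k)$ with $\kappa>1$, the two summands of $g_k^2$ are dominated, up to constants, by $|\xi_k|^{2\kappa-2}\rho_k(\xi_k)$ and $|\xi_k|^{2\kappa}\big(\rho_k'(\xi_k)/\rho_k(\xi_k)\big)^2\rho_k(\xi_k)$ (the zone $\delta/2\le|\xi_k|<\delta$ being harmless because $|\xi_k|^{2\kappa-2}$ is bounded below there), hence \eqref{E5}--\eqref{E6} give $\int_0^t\!\int g_k^2\,\mu_k(\d\xi)\,\d s<\infty$; therefore $D_k^*\mathbf 1(t)\in L^2$ and, by Jensen's inequality, $D_k^*\mathbf 1(t)\in L^q$ for every $q\in[1,2]$.

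For the formula, fix $t>0$ and apply Lemma \ref{L2} with $T=t$. If $\Phi=\varphi(\Pi)$ with $\varphi$ bounded and $\mathfrak F_t$-measurable, then $\Phi$ depends only on $\Pi|_{[0,t]}$ and, since $\mathcal{Q}_k^\varepsilon$ does not change time coordinates, $\mathcal{Q}_k^\varepsilon\Phi=\varphi(\mathcal{Q}_k^\varepsilon\Pi)$; as $\d\mathbb P^\varepsilon=M_k^\varepsilon(t)\,\d\mathbb P$ and (by Lemma \ref{L2}) $\mathcal{Q}_k^\varepsilon\Pi|_{[0,t]}$ under $\mathbb P^\varepsilon$ has the law of $\Pi|_{[0,t]}$ under $\mathbb P$, we get $\mathbb E[\mathcal{Q}_k^\varepsilon\Phi\,M_k^\varepsilon(t)]=\mathbb E^{\mathbb P^\varepsilon}[\varphi(\mathcal{Q}_k^\varepsilon\Pi)]=\mathbb E[\varphi(\Pi)]=\mathbb E\Phi$. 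For an arbitrary $\mathfrak F_t$-measurable $\Phi\in L^p(\Pi)$ choose bounded $\varphi_n$ with $\varphi_n(\Pi)\to\Phi$ in $L^p(\mathbb P)$; since $\mathcal{Q}_k^\varepsilon\Pi$ under $\mathbb P^\varepsilon$ has the law of $\Pi$ under $\mathbb P$, the sequence $\varphi_n(\mathcal{Q}_k^\varepsilon\Pi)$ is $L^p(\mathbb P^\varepsilon)$-Cauchy with limit $\mathcal{Q}_k^\varepsilon\Phi$, and passing to the limit in the above identity yields
$$
\mathbb E\big[\mathcal{Q}_k^\varepsilon\Phi\,M_k^\varepsilon(t)\big]=\mathbb E\Phi,\qquad \varepsilon\in[-1,1].
$$

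This rearranges into the exact identity $\mathbb E\big[\varepsilon^{-1}(\mathcal{Q}_k^\varepsilon\Phi-\Phi)\big]=-\,\mathbb E\big[\mathcal{Q}_k^\varepsilon\Phi\cdot\varepsilon^{-1}(M_k^\varepsilon(t)-1)\big]$, and it remains to let $\varepsilon\to0$. Write $M_k^\varepsilon(t)=\exp X_k^\varepsilon(t)$ with $X_k^\varepsilon(t)=\int_0^t\int\log\lambda_k^\varepsilon\,\widehat\Pi_k-\int_0^t\int\Lambda_k^\varepsilon\,\mu_k\,\d s$ and $X_k^0\equiv0$. Since $\partial_\varepsilon\Lambda_k^\varepsilon=(1-1/\lambda_k^\varepsilon)\,\partial_\varepsilon\lambda_k^\varepsilon$ vanishes at $\varepsilon=0$, while $\partial_\varepsilon\log\lambda_k^\varepsilon|_{\varepsilon=0}=\partial_\varepsilon\lambda_k^\varepsilon|_{\varepsilon=0}=g_k$ on $(-\delta,\delta)\setminus\{0\}$, one gets $\partial_\varepsilon X_k^\varepsilon(t)|_{\varepsilon=0}=\int_0^t\int_{(-\delta,\delta)\times\mathbb R^{d-1}}g_k\,\widehat\Pi_k=-D_k^*\mathbf 1(t)$, hence $\varepsilon^{-1}(M_k^\varepsilon(t)-1)\to-D_k^*\mathbf 1(t)$ as $\varepsilon\to0$, the convergence being in $L^q$ for every $q\le2$ thanks to the uniform-in-$\varepsilon$ moment bounds for $M_k^\varepsilon(t)$ granted by Lemma \ref{L2}. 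Combining this with $\mathcal{Q}_k^\varepsilon\Phi\to\Phi$ and Hölder (once more using the moment bounds of Lemma \ref{L2}) shows that the right-hand side converges to $\mathbb E[\Phi\,D_k^*\mathbf 1(t)]$. On the left-hand side the differentiability of $\Phi$ gives $\varepsilon^{-1}(\mathcal{Q}_k^\varepsilon\Phi-\Phi)\to D_k\Phi$ in probability, and once this is upgraded to an $L^1$ limit we obtain $\mathbb E[D_k\Phi]=\mathbb E[\Phi\,D_k^*\mathbf 1(t)]$.

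The main obstacle is precisely this last upgrade: passing from convergence in probability of the difference quotients $\varepsilon^{-1}(\mathcal{Q}_k^\varepsilon\Phi-\Phi)$ to convergence in $L^1$. For the concrete functionals needed later in the paper one checks a uniform $L^{1+\eta}$ bound on these quotients by hand; in general one first proves the formula on a subclass on which $D_k$ is a well-behaved (closable) operator and then extends. A secondary, more routine, point is the $L^q$-convergence $\varepsilon^{-1}(M_k^\varepsilon(t)-1)\to-D_k^*\mathbf 1(t)$, obtained by dominated convergence on the Poisson space from the explicit exponential form of $M_k^\varepsilon$ and the uniform moment estimates of Lemma \ref{L2} --- which is exactly where the integrability hypotheses \eqref{E4}--\eqref{E6} enter. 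By contrast, the $q$-integrability of $D_k^*\mathbf 1(t)$ and the change-of-variables identity are elementary.
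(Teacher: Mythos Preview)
Your argument is essentially the paper's own: both invoke Lemma~\ref{L2} to get the invariance $\mathbb{E}[\mathcal{Q}_k^\varepsilon\Phi\,M_k^\varepsilon(t)]=\mathbb{E}\Phi$, differentiate at $\varepsilon=0$, and identify $\partial_\varepsilon M_k^\varepsilon(t)|_{\varepsilon=0}$ with $-D_k^*\mathbf{1}(t)$ via the explicit computation of $\partial_\varepsilon\lambda_k^\varepsilon|_{\varepsilon=0}$. The ``main obstacle'' you flag---justifying the interchange of $\partial_\varepsilon$ and $\mathbb{E}$, i.e.\ upgrading the difference quotients to $L^1$---is not addressed in the paper either (it simply writes $\frac{\d}{\d\varepsilon}\mathbb{E}[\mathcal{Q}_k^\varepsilon\Phi\,M_k^\varepsilon(t)]=0$ and differentiates under the expectation, deferring to \cite{Norris}); so your write-up is, if anything, more explicit about where the analytic work lies.
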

\begin{proof} Note that the process $D_{k}^*\mathbf{1}(t)$ is well-defined and $q$-integrable thanks to \REQ{E5}. By Lemma \ref{L2} we have
$$
\frac{\d }{\d \varepsilon} \mathbb{E}\left( \mathcal{Q}^{\varepsilon}_ k  \Phi \right) M^{\varepsilon }_k (t)=0.
$$
Thus
$$
0 = \mathbb{E} \left[ D_{k} \Phi  M^{0}_k (t)+ \Phi  R(t)\right]= \mathbb{E} \left[ D_{k}\Phi + \Phi  R(t)\right],
$$
where
$$
R(t):= \frac{\d }{\d \varepsilon} M^{\varepsilon }_k  (t)\vert_{\varepsilon =0}.
$$
Consequently, we need to show that $D_{k}^*\mathbf{1}(t)= -R(t)$.

Since
$$
M^{\varepsilon }_k  (t)= \exp\left\{ \int_0^t \int_{ { \mathbb{R}^d } } \log \lambda^{\varepsilon }_{k} (s,\xi_k)\widehat \Pi_k(\d s, \d \xi ) - \int_0^t \int_{ {  (-\delta, \delta) \times {\mathbb R}^{d-1}  }   }   \Lambda^{\varepsilon }_k (s,\xi_k )  {\mu_k (\d \xi)} \d s\right\},
$$
we have
\begin{align*}
R(t)&=
 \int_0^t \int_{ { \mathbb{R}^d }}  \frac{\frac{\d }{\d \varepsilon} \lambda^{\varepsilon}_{k} (s,\xi_k)}{\lambda^{\varepsilon }_k (s,\xi_k
 )}\vert_{\varepsilon =0} \widehat{\Pi}_k(\d s, \d \xi)
 - \int_0^t
 \int_{ { (-\delta, \delta) \times {\mathbb R}^{d-1}  }   }  \frac{\d }{\d \varepsilon} \Lambda^{\varepsilon }_{k} (s,\xi_k )\vert_{\varepsilon =0} { \mu_k} (\d \xi) \d s.
\end{align*}
Finally
 {
\begin{align*}
\frac{\d }{\d \varepsilon}\lambda^{\varepsilon }_{k} (s,\xi_k)\vert_{\varepsilon =0}
&= \frac{\partial  V_k}{\partial x_k} (t,\xi_k) + \frac{\rho_k'(\xi_k)}{\rho_k(\xi_k)} V_k(s,\xi_k)
= \frac{\frac {\d  }{\d  \xi_k} \left(V_k(s,\xi_k)\rho_k(\xi_k)\right)} {\rho_k(\xi_k)}.
\end{align*}
}
\end{proof}

\section{Malliavin derivative of $X^x$}\label{S4}

Let $X^x(t)=\left[ X_1^x(t), \ldots, X_d^x(t)\right]^*\in \mathbb{R}^d$ be the value of the solution at time $t$. We use the convention that the vectors in $\mathbb{R}^d$ are columns, and the derivatives (gradients) are rows.   Using  the chain rule (see Lemma \ref{L1}) it is easy to check that for any $p\ge 1$, each of its coordinate  is  a $p$-differentiable functional of $\Pi$ and the $d\times d$-matrix  valued process $\mathbb{D} X^x(t)= \left[ D_jX^x_i(t)\right]$ satisfies the following random ODE
\begin{equation}\label{E18}
\d  \mathbb{D} X^x(t)= \nabla b(X^x(t))\mathbb{D} X^x(t)\d t + \d Z^V(t),\qquad  \mathbb{D} X^x(0)=0
\end{equation}
(cf. Section 5 in \cite{Bass-Cranston}) where $Z^V(t)= \left[Z^V_{ij}(t)\right]$, $t \ge 0,$ is a $d\times d$-matrix valued process
\begin{equation}\label{E19}
Z^V_{ii}(t) := \int_0^t \int_{\mathbb{R}}V_i(s,\xi)\Pi_i(\d s,\d \xi), \qquad Z^V_{ij}(t)=0\quad \text{if $j\not = i$}.
\end{equation} 
Note that $\int_{\mathbb{R}} \vert V_i(t,\xi)\vert m_i(\d\xi)<+\infty$ thanks to  \REQ{E4}, and therefore process $Z^V$ is well defined and $q$-integrable for any $q\in [1,+\infty)$. Clearly we have:
\begin{equation}\label{23}
 \mathbb{D} Z(t) = Z^V(t), \;\;\; t \ge 0.
\end{equation}
Let $\nabla X^x(t)$ be the derivative in probability   of the solution with respect to the initial value
$$
\left[ \nabla X^x(t)\right] _{i,j}= \frac{\partial }{\partial x_j} X^x_i(t).
$$
Note that, the process $X^x$ might not  be  integrable. However, as the noise is additive and $b$ has bounded derivative,  $ \nabla X^x(t)$ exists, it is $p$-integrable, for any $p\ge 1$, and
$$
\d \nabla X^x(t)= \nabla b(X^x(t))\nabla X^x(t)\d t, \qquad \nabla X^x(0)=I.
$$
Since $b$ has bounded derivatives, we have the next result in which $\|\cdot\|$ is any fixed norm on the space of real $d\times d$-matrices such that $\| CD\| $ $ \le \| C\| \| D\| $ for $d\times d$-matrices $C$ and $D$.
\begin{lemma}\label{L4}
For all $t\ge 0$ and $x\in \mathbb{R}^d$, $\nabla X^x(t)$ is an invertible matrix. Moreover, there is a constant $C$ such that
$$
\|\nabla X^x(t)\|+ \|\left(\nabla X^x(t)\right)^{-1}\|\le C\e^{Ct}, \qquad \forall\, t\ge 0, \ x\in \mathbb{R}^d.
$$
Moreover,  there is a constant $C$ such that
$$
 \|\nabla X^x(t)-I\| + \|\left(\nabla X^x(t)\right)^{-1}-I\|\le Ct.
$$
\end{lemma}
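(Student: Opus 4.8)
The plan is to treat $\nabla X^x(t)$ as the solution of the linear matrix ODE $\d \nabla X^x(t)= \nabla b(X^x(t))\nabla X^x(t)\d t$ with $\nabla X^x(0)=I$, where the coefficient $t\mapsto A(t):=\nabla b(X^x(t))$ is a (pathwise) bounded measurable matrix-valued function, with $\|A(t)\|\le M$ for a constant $M$ depending only on the uniform bounds on the first derivatives of $b$ from assumption (i). First I would record that, since $A(\cdot)$ is bounded, the linear ODE has a unique absolutely continuous solution on $[0,\infty)$; writing the integral form $\nabla X^x(t)=I+\int_0^t A(s)\nabla X^x(s)\,\d s$ and applying Gronwall's inequality to $\|\nabla X^x(t)\|$ gives $\|\nabla X^x(t)\|\le \e^{Mt}$, which yields the first half of the first bound with $C=\max\{1,M\}$ (the constant is independent of $x$ and of the noise path precisely because $M$ is).

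Next I would handle invertibility and the bound on the inverse. The clean way is to introduce $\Psi(t)$ as the solution of the adjoint/companion linear ODE $\d \Psi(t) = -\Psi(t) A(t)\,\d t$, $\Psi(0)=I$; then $\frac{\d}{\d t}\bigl(\Psi(t)\nabla X^x(t)\bigr)= -\Psi A \nabla X^x + \Psi A \nabla X^x = 0$, so $\Psi(t)\nabla X^x(t)\equiv I$, which shows $\nabla X^x(t)$ is invertible with $\left(\nabla X^x(t)\right)^{-1}=\Psi(t)$. The same Gronwall argument applied to $\|\Psi(t)\|$ (using $\|\Psi(t)\|\le 1+\int_0^t\|\Psi(s)\|\,M\,\d s$) gives $\|\Psi(t)\|\le \e^{Mt}$, completing the bound $\|\nabla X^x(t)\|+\|\left(\nabla X^x(t)\right)^{-1}\|\le 2\e^{Mt}\le C\e^{Ct}$.

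For the small-time linear bounds, from $\nabla X^x(t)-I=\int_0^t A(s)\nabla X^x(s)\,\d s$ and the already-established bound $\|\nabla X^x(s)\|\le \e^{Mt}$ on $[0,t]$, I get $\|\nabla X^x(t)-I\|\le M t\,\e^{Mt}$; restricting to $t\in[0,T]$ for any fixed $T$ (or noting one only needs such an estimate for $t$ small) this is $\le C t$. Likewise $\|\Psi(t)-I\|=\|\int_0^t \Psi(s)A(s)\,\d s\|\le M t\,\e^{Mt}\le Ct$. I do not foresee a real obstacle here: the only point requiring a word of care is making explicit that all constants are uniform in $x$ and in $\omega$, which follows because the single quantity that enters every estimate is the deterministic sup-norm bound $M$ on $\nabla b$, valid along every trajectory by assumption (i); and, if one wants the linear estimates to hold for all $t\ge 0$ rather than on a bounded interval, one simply states them for $t\in[0,T]$ with $C=C(T)$, consistent with how the lemma is used later in the paper.
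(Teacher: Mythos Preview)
Your proposal is correct and is exactly the standard argument the paper has in mind: the paper does not give an explicit proof of this lemma, stating only that it follows because $b$ has bounded derivatives, and your Gronwall argument together with the companion equation for the inverse is the natural way to fill in the details. Your remark that the linear-in-$t$ bound on $\|\nabla X^x(t)-I\|$ and $\|(\nabla X^x(t))^{-1}-I\|$ should be read on a bounded time interval $[0,T]$ with $C=C(T)$ is apt and consistent with how the lemma is actually used later (e.g.\ in the proof of Lemma~\ref{L6}, where only $s\in(0,T]$ is needed).
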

As a simple consequence of \REQ{E18} and Lemma \ref{L4} we have
$$
 \mathbb{D} X^x(t)= \nabla X^x(t)\int_0^t \left( \nabla X^x(s)\right)^{-1}\d Z^V(s).
$$
Let
\begin{equation}\label{E20}
M(t,x):= \int_0^t \left(\nabla X^x(s)\right)^{-1} \d Z^V(s).
\end{equation}
 Then  $\mathbb{D}X^x(t)= \nabla X^x(t)M(t,x)$ and consequently  the matrix valued process $A=[A_{k,j}(t,x)]$ given by \REQ{E10} satisfies
 \begin{equation}\label{E21}
 A(t,x)= \left(\mathbb{D}X^x(t)\right)^{-1} \nabla X^x(t) = \left(M(t,x)\right)^{-1}.
 \end{equation}
The proof of the following lemma is moved to the next section (Section \ref{S5}).
\begin{lemma}\label{L5}
Assume that  the parameter ${\delta}$ in \REQ{E12} is small enough.   Let $1\le p$. The Malliavin matrix  $\mathbb{D} X^x(t)$ is invertible and $p$-integrable. Moreover, the matrix valued process $A=[A_{k,j}(t,x)]$ given by \REQ{E10}  or \REQ{E21} is differentiable and $p$-integrable.
\end{lemma}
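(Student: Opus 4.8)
The plan is to read everything off the representations $\mathbb{D}X^x(t)=\nabla X^x(t)M(t,x)$ and $A(t,x)=(M(t,x))^{-1}$ from \REQ{E20}--\REQ{E21}. Since by Lemma \ref{L4} the matrix $\nabla X^x(t)$ is invertible with $p$-integrable inverse, it suffices to prove that $M(t,x)$ is $\P$-a.s.\ invertible, that $(M(t,x))^{-1}$ is $p$-integrable for every $p\ge1$, and that $A=(M(t,x))^{-1}$ is a differentiable functional of $\Pi$; invertibility and $p$-integrability of $\mathbb{D}X^x(t)$ itself are then immediate (for the latter, from $\|M(t,x)\|\le C\e^{Ct}\sum_i Z^V_{ii}(t)$ and the $q$-integrability of $Z^V$). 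The idea is to treat $M(t,x)$ as a small perturbation of the diagonal matrix $Z^V(t)$. First I would write
\[
M(t,x)=Z^V(t)+R(t,x),\qquad R(t,x):=\int_0^t\bigl[(\nabla X^x(s))^{-1}-I\bigr]\,\d Z^V(s),
\]
so that, on the event where $Z^V(t)$ is invertible, $M(t,x)=\bigl(I+R(t,x)(Z^V(t))^{-1}\bigr)Z^V(t)$.

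The key step is to bound the perturbation \emph{uniformly in $t$ and $x$}. Because $V_i(s,\xi)=\phi_\delta(\xi)\psi_\delta(s)$ vanishes for $|s|\ge\delta$, the nondecreasing process $Z^V_{ii}$ grows only on $[0,\delta)$, hence $\d Z^V_{ii}(s)$ is carried by $[0,\delta)$; combining this with $\|(\nabla X^x(s))^{-1}-I\|\le Cs$ from Lemma \ref{L4} gives, entrywise,
\[
|R_{ij}(t,x)|\le C\int_0^t s\,\d Z^V_{jj}(s)\le C\delta\,Z^V_{jj}(t),\qquad t>0,\ x\in\R^d.
\]
Since $(Z^V(t))^{-1}$ is diagonal this yields $\|R(t,x)(Z^V(t))^{-1}\|\le C'\delta$ with $C'$ independent of $t$ and $x$. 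Fixing $\delta$ so small that $C'\delta\le\tfrac12$, the matrix $I+R(t,x)(Z^V(t))^{-1}$ is invertible with inverse of norm at most $2$ (Neumann series), so $M(t,x)$ — and hence $\mathbb{D}X^x(t)=\nabla X^x(t)M(t,x)$ — is invertible on $\{Z^V(t)\ \text{invertible}\}$, with
\[
A(t,x)=(M(t,x))^{-1}=(Z^V(t))^{-1}\bigl(I+R(t,x)(Z^V(t))^{-1}\bigr)^{-1}.
\]

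Next I would check $\P$-a.s.\ invertibility of $Z^V(t)$ and settle the moments and differentiability. Assumption $(ii)$ forces each $m_i$ to have infinite mass near the origin, so $\P$-a.s.\ $\Pi_i$ has an atom in $[0,t\wedge\tfrac\delta2]\times\{0<|\xi|<\delta\}$, which contributes a strictly positive amount to $Z^V_{ii}(t)=\int_0^t\int_\R V_i(s,\xi)\Pi_i(\d s,\d\xi)$; thus $Z^V_{ii}(t)>0$ a.s.\ and $Z^V(t)$ is a.s.\ invertible. Moreover $\|A(t,x)\|\le2\max_i(Z^V_{ii}(t))^{-1}$, and since for $t\le\delta/2$ one has $Z^V_{ii}(t)=\int_0^t\int_\R\phi_\delta(\xi)\Pi_i(\d s,\d\xi)$, the negative-moment bound of Section \ref{S5} (extended to $t>\delta/2$ by monotonicity in $t$) gives $\E\|A(t,x)\|^p<+\infty$ for all $p$, whence $\E\|(\mathbb{D}X^x(t))^{-1}\|^p<+\infty$ via $(\mathbb{D}X^x(t))^{-1}=A(t,x)(\nabla X^x(t))^{-1}$ and Lemma \ref{L4}. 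For differentiability, each entry of $Z^V(t)$ is a differentiable functional of $\Pi$ — a direct computation with $\mathcal{Q}^\varepsilon_k$ gives $D_kZ^V_{ii}(t)=\delta_{ki}\int_0^t\int_\R\tfrac{\d V_i}{\d\xi}(s,\xi)\,V_i(s,\xi)\Pi_i(\d s,\d\xi)$ — and, $\nabla b$ being bounded, so are the entries of $X^x(t)$, $\nabla X^x(t)$, $(\nabla X^x(t))^{-1}$ and $M(t,x)$, by the chain rule of Lemma \ref{L1} (localizing when necessary on events where these quantities stay bounded). Passing to $A=(M(t,x))^{-1}$ is then done through $\mathcal{Q}^\varepsilon_k\bigl((M(t,x))^{-1}\bigr)=\bigl(\mathcal{Q}^\varepsilon_kM(t,x)\bigr)^{-1}$ and the resolvent identity $\varepsilon^{-1}\bigl((\mathcal{Q}^\varepsilon_kM)^{-1}-M^{-1}\bigr)=-(\mathcal{Q}^\varepsilon_kM)^{-1}\,\varepsilon^{-1}(\mathcal{Q}^\varepsilon_kM-M)\,M^{-1}$, which, since the entries of $M=M(t,x)$ are differentiable, converges in probability to $-M^{-1}(D_kM)M^{-1}$; hence $A$ is differentiable with $D_kA=-A(D_kM)A$.

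The main obstacle is the perturbation estimate of the second paragraph, namely $\|R(t,x)(Z^V(t))^{-1}\|\le C'\delta$ \emph{uniformly in $t$}: this is exactly what forces the hypothesis that $\delta$ (rather than $t$) be small, and it works only because the cutoff $\psi_\delta$ confines the support of $\d Z^V$ to a short time interval near $0$, precisely where $(\nabla X^x(s))^{-1}$ is close to the identity. A subsidiary, purely technical point is making the chain rule rigorous for the globally unbounded matrix-inversion map, which is handled by the standard localization.
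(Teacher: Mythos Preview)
Your proposal is correct and follows essentially the same route as the paper: write $M(t,x)=(I+R(t,x)(Z^V(t))^{-1})Z^V(t)$, use $\|(\nabla X^x(s))^{-1}-I\|\le Cs$ together with the time-cutoff $\psi_\delta$ to bound the perturbation by $C'\delta$ uniformly, invert via Neumann series, and get moments from the negative-moment bound for $Z^V_{jj}(t)$; differentiability of $A$ is obtained from the identity $D_kA=-A(D_kM)A$. The paper packages the perturbation step as Lemma~\ref{L6} (and records the sharper estimate $\|A(t,x)-(Z^V(t))^{-1}\|_{L^p}\le Ct^{-\kappa/\rho+1}$ needed later), but the argument for Lemma~\ref{L5} itself is the same as yours.
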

\section{Proof of Lemma \ref{L5}}\label{S5}
 Below, $\|\cdot\|$ denotes
 %any fixed 
 the operator 
 norm on the space of real $d\times d$-matrices. Moreover 
for a random $d \times d$-matrix  $B$ we set  
$$
 \| B \|_{L^p} = (\mathbb{E} \| B \|^p)^{1/p},\;\;\; p \ge 1.
$$
\begin{lemma}\label{L6}
(i) For any $t>0$, the matrix $Z^V(t)$ is invertible, $\mathbb{P}$-a.s.. Moreover, for any $p \ge 1$, $T>0$, there is a constant $C= C(p,T)$ such that
$$
\| \left(Z^V(t)\right)^{-1} \|_{L^p}\le  Ct^{-\frac{\kappa  }{\rho}}, \qquad t \in (0,T]. 
$$
(ii)  Assume that  the parameter ${\delta}$ in \REQ{E12} is  small enough (depending on the dimension $d$).   Then the  matrix $M(t,x)$ is invertible, $\mathbb{P}$-a.s.. Moreover, for any $p \ge 1$ and any  $T>0$, there is a constant $C= C_{p, T}$ such that
 \begin{equation} \label{wdd}
  \|A(t,x)- \left(Z^V(t)\right)^{-1}\|_{L^p}\le Ct^{-\frac{ \kappa  }{\rho} \, + \, 1}, \qquad t \in (0,T] 
\end{equation}  
 where $A(t,x)=\left(M(t,x)\right)^{-1}$.
\end{lemma}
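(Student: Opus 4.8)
The plan is to prove the two parts in order, (ii) building on (i).

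\emph{Part~(i).} By \REQ{E19} the matrix $Z^V(t)$ is diagonal with nonnegative, nondecreasing entries $Z^V_{ii}(t)=\int_0^t\int_{\mathbb R}V_i(s,\xi)\,\Pi_i(\d s,\d\xi)$, and by \REQ{E11}--\REQ{E12} we have $V_i(s,\xi)=|\xi|^\kappa$ for $0<|\xi|\le\delta/2$, $0\le s\le\delta/2$. Invertibility of $Z^V(t)$ amounts to $Z^V_{ii}(t)>0$ a.s.\ for every $t>0$ and $i$, which follows since hypothesis~(ii) of Theorem~\ref{T1} forces $m_i\{0<|\xi|\le\delta/2\}=+\infty$, so $\Pi_i$ a.s.\ has a jump in $[0,t]\times\{0<|\xi|\le\delta/2\}$. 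For the moment bound, since $\|(Z^V(t))^{-1}\|=\max_i(Z^V_{ii}(t))^{-1}$ and hence $\|(Z^V(t))^{-1}\|^p\le\sum_i(Z^V_{ii}(t))^{-p}$, it suffices to estimate $\mathbb E(Z^V_{ii}(t))^{-p}$. Here I would combine the elementary identity $\mathbb E\,Y^{-p}=\Gamma(p)^{-1}\int_0^\infty\lambda^{p-1}\,\mathbb E\,\e^{-\lambda Y}\,\d\lambda$ (valid for a.s.\ positive $Y$) with the fact that for $t\le\delta/2$ the increment $Z^V_{ii}(t)$ is infinitely divisible, $\mathbb E\,\e^{-\lambda Z^V_{ii}(t)}=\exp(-t\Psi_i(\lambda))$ with $\Psi_i(\lambda)=\int_{\mathbb R}(1-\e^{-\lambda|\xi|^\kappa\psi_\delta(\xi)})\,m_i(\d\xi)$. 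The decisive estimate is the lower bound $\Psi_i(\lambda)\ge(1-\e^{-1})\,m_i\{\lambda^{-1/\kappa}\le|\xi|\le\delta/2\}\ge c\lambda^{\rho/\kappa}$ for all large $\lambda$, which is the quantitative use of hypothesis~(ii); substituting it and changing variables $u=ct\lambda^{\rho/\kappa}$ gives $\mathbb E(Z^V_{ii}(t))^{-p}\le Ct^{-p\kappa/\rho}$ for $t\in(0,\delta/2]$, and for $t\in(\delta/2,T]$ monotonicity of $Z^V_{ii}$ gives a uniform bound which on this range is again $\le Ct^{-p\kappa/\rho}$. Summing over $i$ proves~(i).

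\emph{Part~(ii).} Write $(\nabla X^x(s))^{-1}=I+B(s)$, with $\|B(s)\|\le Cs$ by Lemma~\ref{L4} and $C$ independent of $\delta$ (as $X^x$ does not involve $\delta$). Since $Z^V$ has finite-variation paths (each $Z^V_{ii}$ is nondecreasing and integrable by \REQ{E4}) and the cutoff $\psi_\delta$ confines its jumps to $[0,\delta)$, the definition \REQ{E20} of $M(t,x)$ yields the pathwise decomposition
$$
M(t,x)=\int_0^t\d Z^V(s)+\int_0^{t\wedge\delta}B(s)\,\d Z^V(s)=Z^V(t)+R(t,x),
$$
where $R(t,x):=\int_0^{t\wedge\delta}B(s)\,\d Z^V(s)$. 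The structural point is to factor $M(t,x)=\bigl(I+R(t,x)(Z^V(t))^{-1}\bigr)Z^V(t)$ by multiplying by $(Z^V(t))^{-1}$ \emph{on the right}: since $(Z^V(t))^{-1}$ is diagonal, $\bigl(R(t,x)(Z^V(t))^{-1}\bigr)_{kj}=R_{kj}(t,x)\,(Z^V_{jj}(t))^{-1}$, and $|R_{kj}(t,x)|\le\int_0^{t\wedge\delta}Cs\,\d Z^V_{jj}(s)\le C(t\wedge\delta)\,Z^V_{jj}(t)$, so $\|R(t,x)(Z^V(t))^{-1}\|\le dC(t\wedge\delta)\le dC\delta$. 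Taking $\delta$ small enough depending on $d$ makes this $\le1/2$; then $I+R(t,x)(Z^V(t))^{-1}$ is invertible with inverse of norm $\le2$, so $M(t,x)$ is invertible and $A(t,x)=(Z^V(t))^{-1}\bigl(I+R(t,x)(Z^V(t))^{-1}\bigr)^{-1}$, consistently with \REQ{E21}. (A left factorization does not work, as $\|(Z^V(t))^{-1}R(t,x)\|$ would carry the uncontrolled ratio $\max_jZ^V_{jj}(t)/\min_jZ^V_{jj}(t)$ of the independent diagonal entries.)

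To conclude, with $\mathcal R=R(t,x)(Z^V(t))^{-1}$ the identity $(I+\mathcal R)^{-1}-I=-(I+\mathcal R)^{-1}\mathcal R$ gives $A(t,x)-(Z^V(t))^{-1}=(Z^V(t))^{-1}\bigl[(I+\mathcal R)^{-1}-I\bigr]$ and hence, pathwise, $\|A(t,x)-(Z^V(t))^{-1}\|\le2\|\mathcal R\|\,\|(Z^V(t))^{-1}\|\le2dC(t\wedge\delta)\,\|(Z^V(t))^{-1}\|$. Taking $L^p$-norms and inserting part~(i) yields $\|A(t,x)-(Z^V(t))^{-1}\|_{L^p}\le C(t\wedge\delta)\,t^{-\kappa/\rho}\le Ct^{1-\kappa/\rho}$ for $t\in(0,\delta]$, while for $t\in(\delta,T]$ both $Z^V(t)$ and $M(t,x)$ coincide with their values at $\delta$, so the left-hand side is constant and still dominated by $C_{p,T}t^{1-\kappa/\rho}$ on this compact range; this is \REQ{wdd}. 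I expect the two delicate points to be the Laplace-exponent lower bound $\Psi_i(\lambda)\ge c\lambda^{\rho/\kappa}$ in part~(i), and, in part~(ii), the bookkeeping that upgrades the a priori bound $\|B(s)\|\le Cs$ (useless for fixed $t$ up to $T$) into the effectively small constant obtained by exploiting $s\le\delta$, combined with the right-hand factorization that neutralizes the possibly large disparity among the diagonal entries of $Z^V(t)$.
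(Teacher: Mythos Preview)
Your proposal is correct and follows essentially the same route as the paper. Part~(i) reproduces the Laplace-transform argument that the paper packages as Lemma~\ref{L7}/Corollary~\ref{C1}, and Part~(ii) uses the same right factorization $M(t,x)=\bigl(I+Q(t,x)\bigr)Z^V(t)$ with the columnwise bound $|Q(t,x)e_j|\le ct$ exploiting the diagonal structure of $(Z^V(t))^{-1}$; your aside on why the left factorization fails is exactly the content of the paper's Remark~\ref{comm}, and your explicit split $t\le\delta$ versus $t>\delta$ is a harmless variant of the paper's $\min(ct,1/2)$.
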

\begin{proof} The first part of the lemma follows from Corollary \ref{C1} from Section \ref{S7} below. To show the second part note that
$$ 
M(t,x)= Z^V(t)+ \int_0^t R(s,x)\d Z^V(s),
$$
where  $R(t,x):= \left(\nabla X^x(t)\right)^{-1}-I$. Clearly, $R(t,x)$ is a random variable taking values in the space of $d\times d$ matrices. Note that 
 \begin{gather*}
\Big( \int_0^t R(s,x)\d Z^V(s) \Big)_{ij} = \int_0^t \int_{\mathbb{R}} R_{ij}(s,x) V_j ( s, \xi_j)  \Pi_j (ds, d \xi_j )
\\
= \Big(\sum_{0< s \le t} R(s,x) \tilde V(s, \triangle Z(s)) \Big)_{ij},
\end{gather*}
with $\triangle Z(s) =  Z(s) -  Z({s-})  $, where $\tilde V(s, z)$ is a diagonal matrix,   $s \ge 0,$ $z \in \R^d$, such that 
$$
(\tilde V(s, z))_{ii} = V_i(s,z_i),\;\;\; i =1, \ldots, d. 
$$
%Hence,
%\begin{gather*}
%M(t,x) = \sum_{0< s \le t}[ \tilde V(s, \triangle Z (s)) - R(s,x) %\tilde V(s, \triangle Z(s))].
%\end{gather*}
 Moreover,  $\mathbb{P}$-a.s., 
$Z^V(t)= \sum_{0< s \le t} \tilde V(s, \triangle Z(s))$  is convergent by \eqref{E4} and it is also invertible. 
 We write
\begin{gather} \label{inv1}
M(t,x) =   \Big (I+ \int_0^t R(s,x)\d Z^V(s)  \, (Z^V(t))^{-1}\Big) Z^V(t).
\end{gather}
We would like to obtain, for $\delta >0$ small enough, $t>0$, 
\begin{gather} \label{d11}
 A(t,x) = (Z^V(t))^{-1} \, \Big (I+ \int_0^t R(s,x)\d Z^V(s)  \, (Z^V(t))^{-1}\Big)^{-1}.
\end{gather}
To this purpose we consider
$$
Q(t,x)= \int_0^t R(s,x)\d Z^V(s)  \, (Z^V(t))^{-1}.
$$
Recall that  $(e_j)$ is  the canonical basis of  $\R^d$.  We get for $j =1, \ldots, d$, $\P$-a.s., 
\begin{align*}
 Q(t,x) e_j 
%\int_0^t R(s,x)\d Z^V(s) \,  \left(Z^V(t)\right)^{-1} e_j 
%\\ 
&= \sum_{0< s \le t} R(s,x) \tilde V(s, \triangle Z(s)) e_j \, 
\big(  \int_0^t \int_{\mathbb{R}}  V_j ( r, \xi_j)  \Pi_j (dr, d \xi_j )\big)^{-1}
\\
&= \sum_{0< s \le t} R(s,x)  V_j(s, \triangle Z_j(s) ) e_j \, 
\Big(  \int_0^t \int_{\mathbb{R}}  V_j ( r, \xi_j)  \Pi_j (dr, d \xi_j )\Big)^{-1}
\end{align*}
and so
\begin{align*}
 %\Big |\int_0^t R(s,x)\d Z^V(s) \,  \left(Z^V(t)\right)^{-1} e_j %\Big| 
% \\
| Q(t,x) e_j| &\le  \sum_{0< s \le t} \| R(s,x) \| \,  V_j(s, \triangle Z_j(s) )  \, 
\Big(  \int_0^t \int_{\mathbb{R}}  V_j ( r, \xi_j)  \Pi_j (dr, d \xi_j )\Big)^{-1}
\\
&\le c  t   \int_0^t \int_{\mathbb{R}}  V_j ( r, \xi_j)  \Pi_j (dr, d \xi_j ) \, \Big(  \int_0^t \int_{\mathbb{R}}  V_j ( s, \xi_j)  \Pi_j (ds, d \xi_j )\Big)^{-1} \\
&\le \min (ct,1/2),
%\sum_{0< s \le t}  \,  V_j(s, \triangle Z_j(s) ) e_j \, 
%\Big(  \int_0^t \int_{\mathbb{R}}  V_j ( s, \xi_j)  \Pi_j (ds, d %\xi_j )\Big)^{-1}
\end{align*}
where $c$ is independent of $x \in \R^d$ and $\omega$,  $\P$-a.s.
Therefore, as $Z^V(t)$ is invertible, the matrix $M(t,x)$ is invertible and $A(t,x)=\left(M(t,x)\right)^{-1}$ satisfies \eqref{d11}. Moreover
\begin{align} \label{cinque}
A(t,x)  =   \left(Z^V(t)\right)^{-1} + \left(Z^V(t)\right)^{-1}\sum_{n=1}^{+\infty} (-1)^n (Q(t,x))^n.   
\end{align}
% ??? we need to correct  
Consequently, we have 
$$
\left \| A(t,x)-\left(Z^V(t)\right)^{-1}\right\|_{L^p}\le  \min\{ C_2t, 1/2\} \, \| \left(Z^V(t)\right)^{-1} \|_{L^p}
$$
and \eqref{wdd} follows. The proof is complete.
\end{proof}
 \begin{remark} \label{comm} {\rm 
We note that in the previous proof  it is  important  to have  a term like  
$
\int_0^t R(s,x)\d Z^V(s)  \, (Z^V(t))^{-1}
$ (cf. \eqref{inv1}). Such term can be estimated in a sharp way by 
$\min (ct, 1/2)$. 
On the other hand, a term like $ (Z^V(t))^{-1}\int_0^t R(s,x)\d Z^V(s)  \, $ would be  difficult to estimate in a sharp way (we can estimate the $L^2$-norm by  $C t^{-\frac{ \kappa  }{\rho} \, + \, 3/2}$).  On this respect see  also the computations in  Section 6.2.
}  
\end{remark}  

\subsection{Proof of Lemma \ref{L5}}\label{S61}
Since $b$ has bounded derivatives of the first and second order, $\nabla X^x(t)$  and $\left(\nabla X^x(t)\right)^{-1}$ are  differentiable and $p$-integrable.  Next,
thanks to \REQ{E6}, the matrix valued process $Z^V$ given by \REQ{E19} is also differentiable, $p$-integrable, and
\begin{equation}\label{E22}
\begin{aligned}
D_k Z^V_{kk}(t)&= \frac{\d }{\d \varepsilon} \int_0^t \int_{\mathbb{R}}V_k(s, \xi_k +\varepsilon V_k(s,\xi_k))\Pi_k(\d s, \d \xi_k)\vert_{\varepsilon =0} \\
&= \int_0^t \int_{\mathbb{R}}\psi^2_{{\delta}}(s)\phi_\delta(\xi_k) \phi_\delta' (\xi_k)\Pi_k(\d s, \d \xi_k).
\end{aligned}
\end{equation}
Therefore, $\mathbb{D}X^x(t)$ is differentiable and $p$-integrable.  Clearly $\nabla X^x(t)$ is invertible.  By Lemma \ref{L6}, the matrix  $M(t,x)$ given by \REQ{E20} is invertible and clearly differentiable and  $p$-integrable. We can show the  differentiability of $ \left( \mathbb{D}X^x(t)\right)^{-1}$ or equivalently of $A(t,x)$ in a standard way based on the observations that 
$$
D_k \left( \mathbb{D}X^x(t)\right)^{-1}= - \left( \mathbb{D} X^x(t)\right)^{-1}\left( D_k \mathbb{D} X^x(t)\right)  \left( \mathbb{D} X^x(t)\right)^{-1}. \qquad \square
$$

\section{Proof of Theorem \ref{T1}}\label{S6}
By Lemma \ref{L5} the random field  $Y(t,x)$ given by \REQ{E9} is well defined and integrable.  Clearly,   by the standard approximation argument it is enough to show that for any $f\in C_b^1(\mathbb{R}^d)$ we have  \REQ{E3}. To this end note that
\begin{align*}
\nabla P_t f(x)&= \nabla \mathbb{E}\, f(X^x(t))= \mathbb{E}\, \nabla f(X^x(t))\nabla X^{x}(t).
\end{align*}
Since, by Lemma \ref{L1},
$$
\mathbb{D} f(X^x(t))= \nabla f(X^x(t))\mathbb{D} X^x(t),
$$
and, by Lemma \ref{L5}  the matrix $\mathbb{D} X^x(t)$ is invertible, we have
\begin{align*}
\nabla P_tf(x)&= \mathbb{E}\left( \mathbb{D} f(X^x(t))\right) \left[ \mathbb{D} X^x(t)\right] ^{-1}\nabla X^x(t)\\
&= \mathbb{E}\left(  \mathbb{D} f(X^x(t)) \right)A(t,x)
=\sum_{j=1}^d \sum_{k=1}^d \mathbb{E}\, D_kf(X^x(t))A_{k,j}(t,x) { e_j},
\end{align*}
where $A(t,x)$ is given by \REQ{E10} or equivalently by \REQ{E21}. Since
\begin{align*}
\sum_{k=1}^d D_kf(X^x(t))A_{k,j}(t,x)&= \sum_{k=1}^d \left\{ D_k\left[ f(X^x(t))A_{k,j}(t,x)\right] - f(X^x(t))D_kA_{k,j}(t,x)\right\},
\end{align*}
we have \REQ{E3} with $Y$ given by \REQ{E9}. Cleary, the same arguments can be apply to show the BEL formula for the L\'evy semigroup.

The proof of \REQ{E7} and \REQ{E8}   is more difficult, and it is divided into the following two parts.
\subsection{L\'evy case} Assume that $b\equiv 0$, that is $X^x(t)=  Z^x(t)$. Let us fix a time horizon $T<+\infty$. We are proving estimate \REQ{E7} for the process $Y(t)$ corresponding to the pure L\'evy case.

We have
$$
Y_j(t)= \sum_{k=1}^d \left[ A_{k,j}(t)D^*_k\mathbf{1}(t)- D_k A_{k,j}(t)\right],
$$
where $A(t)= \left[ \mathbb{D}Z^x(t)\right]^{-1}= \left[ Z^V(t)\right]^{-1}$ and $Z^V(t)$ is a diagonal matrix defined in \REQ{E19}. Therefore
$$
Y_{j}(t)=\frac{D_j^*\mathbf{1}(t)}{Z_{jj}^V(t)} - D_j  \frac{1}{Z_{jj}^V(t)} =  \frac{D_j^*\mathbf{1}(t)}{Z_{jj}^V(t)} +  \frac{D_j Z^{V}_{jj}(t)}{\left(Z_{jj}^V(t)\right)^2},
$$
where  $D_j^*\mathbf{1}(t)$ and  $D_j Z^{V}_{jj}$ are given by \REQ{E17} and \REQ{E22}, respectively.  We have
\begin{align*}
\mathbb{E}\left\vert  D_j^*\mathbf{1}(t) \left(Z_{jj}^V(t)\right)^{-1} \right\vert &\le \left(\mathbb{E}\left\vert  D_j^*\mathbf{1}(t)\right\vert ^2\right)^{1/2} \left(\mathbb{E}\left\vert  Z_{jj}^V(t) \right\vert ^{-2}\right)^{1/2}.
\end{align*}
By Lemma  \ref{L6}, there is a constant $C_1$ such that $\mathbb{E}\left\vert  Z_{jj}^V(t) \right\vert ^{-2}\le C_1t^{-2\kappa /\rho }$.  Next there are constants $C_2$ and $C_3$ such that
\begin{equation}\label{Ed11}
\mathbb{E}\left\vert D_j^*\mathbf{1}(t)\right\vert ^2 \le C_2\int_0^t \int_{-\delta}^{\delta}  \left\vert \frac{\frac {\d }{\d  \xi} \left(V_j(s,\xi_j)\rho_j(\xi _j)\right)} {\rho_j(\xi _j)}\right\vert ^2 \rho_j(\xi_j)\d \xi_j \d s\le C_3 t,
\end{equation}
where the last estimate follows from \REQ{E5}. Therefore there is a constant $C_4$ such that
\begin{equation}\label{E23}
\mathbb{E}\left\vert  D_j^*\mathbf{1}(t) \left(Z_{jj}^V(t)\right)^{-1} \right\vert\le C_4 t^{- \frac{\kappa}{\rho} +\frac{1}{2}}, \qquad t\in (0,T].
\end{equation}
Let us observe now that
\begin{align*}
&\left \vert D_j Z^{V}_{jj}(t)\right\vert =  \left\vert \int_0^t \int_{\mathbb{R}}\psi^2_{{\delta}}(s)\phi_\delta(\xi_j) \phi_\delta' (\xi_j)\Pi_j(\d s, \d \xi_j)\right\vert \\
&\le  \left( \int_0^t \int_{\mathbb{R}}\psi^2_{{\delta}}(s)\phi_\delta^2(\xi_j) \Pi_j(\d s, \d \xi_j)\right)^{1/2}
\left( \int_0^t \int_{\mathbb{R}}\psi^2_{{\delta}}(s) \left( \phi_\delta' (\xi_j)\right)^2 \Pi_j(\d s, \d \xi_j)\right)^{1/2}\\
&\le   \int_0^t \int_{\mathbb{R}}\psi_{{\delta}}(s)\phi_\delta(\xi_j) \Pi_j(\d s, \d \xi_j)
\left( \int_0^t \int_{\mathbb{R}}\psi^2_{{\delta}}(s) \left( \phi_\delta' (\xi_j)\right)^2 \Pi_j(\d s, \d \xi_j)\right)^{1/2};
\end{align*}
%{\color{red}
here in the last inequality we have used an elementary relation
$$
\sum_{k}x_k^2\leq \left(\sum_{k}x_k\right)^2,
$$
valid for any non-negative real numbers $\{x_k\}$.
%}
 Thus
\begin{equation}\label{E24}
\left \vert D_j Z^{V}_{jj}(t)\right\vert \le Z^V_{jj}(t) \left( \int_0^t \int_{\mathbb{R}} \psi^2_{{\delta}}(s) \left( \phi_\delta' (\xi_j)\right)^2 \Pi_j(\d s, \d \xi_j)\right)^{1/2}.
\end{equation}
Therefore, by Lemma  \ref{L6},
\begin{align} \label{svv}
\nonumber \mathbb{E} \left\vert  \frac{D_j Z^{V}_{jj}{ (t)}}{\left(Z_{jj}^V(t)\right)^2}\right\vert &\le \mathbb{E}\, \frac{\left( \int_0^t \int_{\mathbb{R}}\psi^2_{{\delta}}(s)  \left( \phi_\delta'(\xi_j)\right)^2 \Pi_j(\d s, \d \xi_j)\right)^{1/2}}{ \int_0^t \int_{\mathbb{R}}\psi_{{\delta}}(s)\phi_\delta(\xi_j) \Pi_j(\d s, \d \xi_j)}\\ \nonumber
&\le\left(  \mathbb{E}\,  \int_0^t \int_{\mathbb{R}} \psi^2_{{\delta}}(s) \left( \phi_\delta'(\xi_j)\right)^2 \Pi_j(\d s, \d \xi_j)\right)^{1/2}\\
&\qquad \times \left( \mathbb{E}\left(
\int_0^t \int_{\mathbb{R}}\psi_{{\delta}}(s)\phi_\delta (\xi_j) \Pi_j(\d s, \d \xi_j)\right)^{-2}\right)^{1/2} \le C_5 t^{ - \frac{\kappa}{\rho}+\frac{1}{2}}.
\end{align}
Note that  $\int_{\mathbb{R}} \left( \phi_\delta'(\xi_j)\right)^2 m_j(\d\xi_j)<+\infty$ thanks to \REQ{E6}.  Summing up, we can find a constant $C$ such that
\begin{equation} \label{wr}
  \mathbb{E}\left\vert Y(t)\right\vert \le Ct^{- \frac{\kappa}{\rho}+\frac{1}{2} }
 \end{equation}
which is the desired estimate. $\square$
\subsection{General  case}
Recall that  $M$ and $A=M^{-1}$ are  given by \REQ{E20} and \REQ{E21}, respectively. Let $T>0.$ We  prove  first that (for $\delta >0$ small enough) there  is a constant $c$ such that
\begin{gather}\label{E25}
\mathbb{E}\left\vert D_k^*\mathbf{1}(t)A_{j,k}(t,x)\right\vert \le c t ^{-\frac{\kappa}{\rho}+\frac{1}{2}}, 
\\
\label{Ewe}
\mathbb{E}\left\vert D_k^*\mathbf{1}(t)A_{j,k}(t,x)- D_k^*\mathbf{1}(t)  \left(Z_{jj}^V(t)\right)^{-1} \right\vert \le 
 c t^{-\frac{ \kappa  }{\rho} \, + \, 3/2 },\;\; \;\; t\in (0,T].   
\end{gather}
By Lemma \ref{L6} there is a   constant $C>0$ such that
$$
 \|A(t,x)- \left(Z^V(t)\right)^{-1}\|_{L^q}\le C t^{-\frac{ \kappa  }{\rho} \, + \, 1}, \;\; t\in (0, T], \;\; q \ge 1.
$$
Therefore, \eqref{Ewe} follows from 
\eqref{Ed11} by using the Cauchy-Schwarz  inequality.  
Clearly \eqref{E25} follows from 
\eqref{E23} 
and  
   \eqref{Ewe}.

It is much harder to evaluate $L^1$-norm of the term
\begin{equation}\label{E26}
I(t,x):= \sum_{j=1}^d \sum_{k=1}^d D_k A_{k,j}(t,x)e_j = \sum_{j=1}^d \sum_{k=1}^d \left[ A(t,x) (D_k M(t,x)) A(t,x)\right]_{k,j}e_j.
\end{equation}
Recall that   $R(s,x):= \left(\nabla X^x(s)\right)^{-1}-I$;  moreover, 
$$
M(t,x)= Z^V(t)+ \int_0^t R(s,x)\d Z^V(s)
$$
is differentiable, $p$-integrable,  and we have (see also \eqref{E22}):
 \begin{equation} \label{ci1}
   D_kM(t,x)=D_kZ^V(t)+  \int_{0}^t R(s,x)\d D_k Z^V(s) + \int_{0}^t D_kR(s,x)\d Z^V(s).
\end{equation}
We have $\|R(s,x)\|\le C_1s$ and  it is not difficult to see that there is a random variable $\eta>0$ integrable with arbitrary power,
such that 
\begin{equation}\label{E29}
\|D_kR(s,x)\|\le \eta s^2 , \qquad s\in (0,T].
\end{equation}
We will  show that $I(t,x)$ is a proper perturbation of  the already estimated
\begin{equation}\label{E27}
I_0(t):= \sum_{j=1}^d \frac{D_j Z^{V}_{jj}(t)}{\left(Z_{jj}^V(t)\right)^2}e_j = \sum_{j=1}^d \sum_{k=1}^d \left[  \left(Z^V(t)\right)^{-1}(D_k Z^V(t)) \left(Z^V(t)\right)^{-1} \right]_{k,j}e_j.
\end{equation}
The proof will be completed as soon as we can show   there is a constant $C_1$ such that  
\begin{equation}\label{E281}
\mathbb{E}\left\vert  I(t,x)-I_0(t)\right\vert  \le 
C_{1}    t^{-\frac{ \kappa  }{\rho} +3/2 },\;\;\; t \in (0,T]. 
\end{equation}
This will imply that  
\begin{equation}\label{w44}
\E |I(t,x)| \le \mathbb{E}\left\vert  I(t,x)-I_0(t)\right\vert  + 
 \mathbb{E}\left\vert  I_0(t)\right\vert \le \tilde C t^{- \frac{\kappa}{\rho}+\frac{1}{2} }. 
\end{equation}
Collecting  \eqref{E25} and \eqref{w44} will give the estimate for $\E |Y(t,x)|$.
 
Let us prove \eqref{E281}.  Recalling that $A(t,x) = (M(t,x))^{-1}$ we have to find an  $L^1$-bound for 
$$
 \|  A(t,x) (D_k M(t,x)) A(t,x) -  (Z^V(t))^{-1}(D_k Z^V(t)) \left(Z^V(t)\right)^{-1}  \|  \le  J_1 + J_2 + J_3,
$$
\begin{gather*}
 J_1 =  \|  A(t,x) (D_k M(t,x)) [A(t,x) -   \left(Z^V(t)\right)^{-1}]  \|,
 \\
 J_2 = \| A(t,x) (D_k M(t,x) - D_k Z^V(t)) \, (Z^V(t))^{-1} \|,
 \\
  J_3 = \| [A(t,x) - (Z^V(t))^{-1}] \,  D_k Z^V(t)) \, (Z^V(t))^{-1} \|.
\end{gather*}
As for $J_3$, using \eqref{wdd} and \eqref{E24} we infer
\begin{gather*}
\E [J_3] \le  \| [A(t,x) - (Z^V(t))^{-1}] \| _{L^2} \,  \| D_k Z^V(t)) \, (Z^V(t))^{-1} \|_{L^2}
\end{gather*}
Using \eqref{wdd} we infer
\begin{gather*}
\E [J_3] \le C t^{-\frac{ \kappa  }{\rho} \, + \, 1  }  \| D_k Z^V(t)) \, (Z^V(t))^{-1} \|_{L^2}.
\end{gather*}
Since $ \| D_k Z^V(t)) \, (Z^V(t))^{-1}\|  = \Big |\frac{D_k Z^{V}_{kk}{ (t)}}{Z_{kk}^V(t)} \Big |$, we can use 
 \eqref{E24} and get 
$$ 
 \E \| D_k Z^V(t)) \, (Z^V(t))^{-1}\|^2
\le 
 \E \left( \int_0^t \int_{\mathbb{R}} \psi^2_{{\delta}}(s) \left( \phi_\delta' (\xi_j)\right)^2 \Pi_j(\d s, \d \xi_j)\right) \le C_4 t
$$
(see \eqref{E6}). We obtain
\begin{gather*}
\E [J_3] \le C t^{-\frac{ \kappa  }{\rho} \, + \, 3/2  }.
\end{gather*}
Concerning $J_2$ we find by \eqref{ww}
\begin{gather*}
 \E [J_2] \le  \| A(t,x) \|_{L^2 } \, \| (D_k M(t,x) - D_k Z^V(t)) \, (Z^V(t))^{-1} \|_{L^2} 
 \\ \le  C t^{-\frac{ \kappa  }{\rho}   } \,  \| (D_k M(t,x) - D_k Z^V(t)) \, (Z^V(t))^{-1} \|_{L^2}. 
\end{gather*}
Now
\begin{equation}\label{rit}
\begin{aligned} 
  &(D_k M(t,x) - D_k Z^V(t)) \, (Z^V(t))^{-1}
  \\ \qquad &= \Big(\int_{0}^t R(s,x)\d D_k Z^V(s) + \int_{0}^t D_kR(s,x)\d Z^V(s) \Big) \,  (Z^V(t))^{-1}.
\end{aligned}
\end{equation}
We will argue as in the proof of Lemma \ref{L6}. Recall that, for $\delta  $ small enough, 
 \begin{align*}
\Big (\int_{0}^t R(s,x)\d D_k Z^V(s) \Big)_{ij} &=   \int_0^t \psi^2_{{\delta}}(s) R_{ij}(s,x) \int_{\mathbb{R}}   \phi_\delta(\xi_j) \phi_\delta' (\xi_j)
%V_j ( s, \xi_j)  
\Pi_j (ds, d \xi_j )
\\ 
&= \Big(\sum_{0< s \le t} R(s,x) \tilde U(s, \triangle Z(s)) \Big)_{ij},
\end{align*} 
where   $\tilde U(s, z)$ is a diagonal matrix,   $s \ge 0,$ $z \in \R^d$, such that 
$$
(\tilde U(s, z))_{ii} = U_i(s, z) =  \psi^2_{{\delta}}(s) \phi_\delta(z_i) \phi_\delta' (z_i),
\;\;\;\; (s,z_i),\;\;\; i =1, \ldots, d. 
$$
Hence
\begin{align*}
 &\Big | \int_{0}^t R(s,x)\d D_k Z^V(s) \,  (Z^V(t))^{-1} e_j \Big|
 \\
 &\quad = \Big | \sum_{0< s \le t} R(s,x) \tilde U(s, \triangle Z(s)) e_j \, 
\big(  \int_0^t \int_{\mathbb{R}}  V_j ( r, \xi_j)  \Pi_j (dr, d \xi_j )\big)^{-1} \Big |
\\
&\quad \le  \sum_{0< s \le t}  \| R(s,x) \|  U_j(s, \triangle Z_j(s) ) \, 
\Big(  \int_0^t \int_{\mathbb{R}}  V_j ( r, \xi_j)  \Pi_j (dr, d \xi_j )\Big)^{-1} 
\\ &\quad \le ct  \sum_{0< s \le t}   U_j(s, \triangle Z_j(s) ) \, 
\Big(  \int_0^t \int_{\mathbb{R}}  V_j ( r, \xi_j)  \Pi_j (dr, d \xi_j )\Big)^{-1}   = ct\,\frac{D_j Z^{V}_{jj}(t)}{ Z_{jj}^V(t)},  
\end{align*}
see  \eqref{E22}. We deduce that 
$$
\Big \| \int_{0}^t R(s,x)\d D_k Z^V(s) \,  (Z^V(t))^{-1} \Big \|_{L^2} \le c t^{3/2}.
$$
Now in order to estimate $J_2$ it remains to consider 
$$
\int_{0}^t D_kR(s,x)\d Z^V(s)  \,  (Z^V(t))^{-1} = 
 \sum_{0< s \le t} D_k R(s,x) \tilde V(s, \triangle Z(s)) (Z^V(t))^{-1},
$$
   where $\tilde V(s, z)$ is a diagonal matrix,   $s \ge 0,$ $z \in \R^d$, such that  $
(\tilde V(s, z))_{ii} $ $ = V_i(s,z_i).$  Using also the bound \eqref{E29} we obtain, for $j =1, \ldots, d,$ 
 \begin{align*}
&\Big |\int_0^t D_k R(s,x)\d Z^V(s) \,  \left(Z^V(t)\right)^{-1} e_j \Big| 
 \\
 &\quad \le  \sum_{0< s \le t} \| D_k R(s,x) \| \,  V_j(s, \triangle Z_j(s) )  \, 
\Big(  \int_0^t \int_{\mathbb{R}}  V_j ( r, \xi_j)  \Pi_j (dr, d \xi_j )\Big)^{-1} \\
&\quad \le \eta t^2.
\end{align*}
It follows that 
$$ 
\Big \|\int_0^t D_k R(s,x)\d Z^V(s) \,  \left(Z^V(t)\right)^{-1} e_j \Big \|_{L^2}  \le C t^2. 
$$
We finally obtain
\begin{gather*}
 \E [J_2] \le C t^{-\frac{ \kappa  }{\rho} \, + \, 3/2  }.
\end{gather*}
To treat $J_1$ we note that 
\begin{equation}\label{s33}
\|  A(t,x)\|_{L^2} \le \|  A(t,x) - \left(Z^V(t)\right)^{-1}\|_{L^2}
+ \|  \left(Z^V(t)\right)^{-1}\|_{L^2} \le C  t^{-\frac{ \kappa  }{\rho}},   \;\;\; t \in (0,T],
\end{equation}
see Lemma \ref{L6}. Hence
\begin{equation} \label{s2r}
\begin{aligned}
 &\E [J_1] \le  \|  A(t,x)\|_{L^2}  \| (D_k M(t,x)) [ A(t,x) -   \left(Z^V(t)\right)^{-1}]  \|_{L^2}
\\ &\qquad  \le  C    t^{-\frac{ \kappa  }{\rho}   } \, \| (D_k M(t,x)) [ A(t,x) -   \left(Z^V(t)\right)^{-1} ]   \|_{L^2 }.
\end{aligned}
\end{equation}
 We write
 \begin{align*}
&\| (D_k M(t,x)) [ A(t,x) -   \left(Z^V(t)\right)^{-1}]\| 
\\&\quad  = \|(D_k M(t,x)) (M(t,x))^{-1}  M(t,x) [ A(t,x) -   \left(Z^V(t)\right)^{-1}]\|
\\
&\quad \le   \|(D_k M(t,x)) (M(t,x))^{-1} \| \, \| I - M(t,x)\left(Z^V(t)\right)^{-1} \|.
\end{align*}
The more difficult term is 
\begin{align*}
  \|(D_k M(t,x)) (M(t,x))^{-1} \| &=  \|(D_k M(t,x)) \left(Z^V(t)\right)^{-1} \, \left(Z^V(t)\right) (M(t,x))^{-1} \|
  \\
  &\le \|(D_k M(t,x)) \left(Z^V(t)\right)^{-1} \| \, \| \left(Z^V(t)\right) (M(t,x))^{-1} \|.
\end{align*}
Now, see \eqref{cinque}, 
\begin{align*}
&\left(Z^V(t)\right) (M(t,x))^{-1} = \left(Z^V(t)\right) A (t,x)
\\ 
&\quad = \left(Z^V(t)\right) \Big (
\left(Z^V(t)\right)^{-1} + \left(Z^V(t)\right)^{-1}\sum_{n=1}^{+\infty} (-1)^n (Q(t,x))^n \Big )\\
&\quad =     \sum_{n=0}^{+\infty} (-1)^n (Q(t,x))^n.
\end{align*}
Hence, $\P$-a.s., 
\begin{gather*}
 \| \left(Z^V(t)\right) (M(t,x))^{-1} \| \le C_1
\end{gather*}
where $C_1 $  is independent of $x$, $t \in (0,T]$ and $\omega$, $\P$-a.s..
 The term  
 $$
 \|(D_k M(t,x)) \left(Z^V(t)\right)^{-1} \|
 $$
 can be treated as  the one in \eqref{rit}. We obtain
\begin{gather*}
\|(D_k M(t,x)) \left(Z^V(t)\right)^{-1} \|_{L^2} \le c t^{1/2}.
\end{gather*}
We infer that 
\begin{equation}\label{w55}
 \|(D_k M(t,x)) (M(t,x))^{-1} \|_{L^2} \le c_1 t^{1/2},\;\; t \in (0,T].  
\end{equation}
It remains to consider 
$$
\| I - M(t,x)\left(Z^V(t)\right)^{-1} \| = \Big \|  \int_0^t R(s,x)\d Z^V(s) \,  \left(Z^V(t)\right)^{-1}  \Big \| \le c_3t
$$
where $c_3 $ is independent of $x$ and $\omega$, $\P$-a.s.. Finally we have
\begin{equation}\label{j11}
\E [J_1] \le C_0  t^{-\frac{ \kappa  }{\rho}  + 3/2},
\end{equation}
and the proof is complete.
$\square$

\section{An integrability result}\label{S7} 
 
%{ \footnotesize E. this is as before}   
 
Assume that $\mathcal{M}$ is a Poisson random measure on $[0,+\infty)\times \mathbb{R}$ with intensity measure $\d t m(\d \xi)$. Given  a measurable $h\colon \mathbb{R}\mapsto [0,+\infty)$ let
$$
J_h(t):= \int_0^t \int_{\mathbb{R}} h(\xi)  \mathcal{M}(\d s, \d \xi).
$$
Then
$$
\mathbb{E} \e^{-\beta J_h(t)} = \exp \left\{ -t \int_{\mathbb{R}} \left(1- \e^{-\beta h(\xi)}\right) m(\d \xi)\right\}.
$$
Using the identity
$$
y^{-q} = \frac{1}{\Gamma(q)} \int_0^{+\infty} \beta ^{q-1} \e^{-\beta y} \d \beta,\qquad y>0,
$$
we obtain
\begin{align*}
\mathbb{E}\, J_h(t)^{-q} &= \frac{1}{\Gamma(q)} \int_0^{+\infty} \beta ^{q-1} \mathbb{E}\, \e^{-\beta J_h(t)} \d \beta \\
&= \frac{1}{\Gamma(q)} \int_0^{+\infty} \beta ^{q-1}  \exp\left\{ -t \int_{\mathbb{R}} \left(1- \e ^{-\beta h(\xi )}\right) m(\d \xi )\right\} \d \beta.
\end{align*}
Using this method one can immediately obtain (see Norris \cite{Norris}) the following result.
\begin{lemma}\label{L7} If for a certain $\rho >0$,
$$
 \liminf_{\varepsilon \downarrow 0} \varepsilon ^\rho m\{h\ge \varepsilon\}>0,
$$
then
$$
\mathbb{E}\, J_h(t)^{-q} \le Ct^{-\frac{q}{\rho}}, \qquad q\ge 1,\ t\in (0,1].
$$
\end{lemma}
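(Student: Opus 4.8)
The plan is to take as starting point the identity established just above the statement, namely
$$\mathbb{E}\, J_h(t)^{-q} = \frac{1}{\Gamma(q)} \int_0^{+\infty} \beta^{q-1} \exp\left\{-t\, g(\beta)\right\}\d\beta, \qquad g(\beta):= \int_{\mathbb{R}}\left(1 - \e^{-\beta h(\xi)}\right) m(\d\xi),$$
so that the whole matter reduces to bounding the Laplace exponent $g(\beta)$ from below by a constant multiple of $\beta^{\rho}$ for large $\beta$, followed by an elementary rescaling of the $\beta$-integral.

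First I would record the pointwise inequality $1 - \e^{-\beta h(\xi)} \ge (1-\e^{-1})\,\chi_{\{h(\xi)\ge 1/\beta\}}$, valid for every $\beta>0$ and every $\xi$ because $h\ge 0$ (if $h(\xi)\ge 1/\beta$ then $\beta h(\xi)\ge 1$, hence $\e^{-\beta h(\xi)}\le \e^{-1}$). Integrating against $m$ gives $g(\beta)\ge (1-\e^{-1})\, m\{h\ge 1/\beta\}$. The hypothesis $\liminf_{\varepsilon\downarrow 0}\varepsilon^{\rho} m\{h\ge\varepsilon\}>0$ supplies constants $c_0>0$ and $\varepsilon_0>0$ with $m\{h\ge\varepsilon\}\ge c_0\,\varepsilon^{-\rho}$ for all $\varepsilon\in(0,\varepsilon_0]$; choosing $\varepsilon=1/\beta$ yields $g(\beta)\ge c_1\,\beta^{\rho}$ for all $\beta\ge\beta_0:=1/\varepsilon_0$, where $c_1:=(1-\e^{-1})c_0$. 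Should $m\{h\ge\varepsilon\}=+\infty$ for some small $\varepsilon$, this only helps, since then the corresponding part of the $\beta$-integral vanishes.

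Then I would split $\int_0^{+\infty}=\int_0^{\beta_0}+\int_{\beta_0}^{+\infty}$. On $[0,\beta_0]$ we use the trivial bound $\e^{-t g(\beta)}\le 1$, so that piece is at most $\beta_0^{q}/q$, a constant independent of $t$, and hence $\le C\, t^{-q/\rho}$ for $t\in(0,1]$ since then $t^{-q/\rho}\ge 1$. On $[\beta_0,+\infty)$ we insert $g(\beta)\ge c_1\beta^{\rho}$ and substitute $u=t^{1/\rho}\beta$:
$$\int_{\beta_0}^{+\infty}\beta^{q-1}\e^{-t c_1 \beta^{\rho}}\d\beta = t^{-q/\rho}\int_{t^{1/\rho}\beta_0}^{+\infty} u^{q-1}\e^{-c_1 u^{\rho}}\d u \le t^{-q/\rho}\int_0^{+\infty} u^{q-1}\e^{-c_1 u^{\rho}}\d u,$$
and the final integral is a finite, $\Gamma$-type constant (substitute $v=c_1 u^{\rho}$ to see it equals $\rho^{-1}c_1^{-q/\rho}\Gamma(q/\rho)$). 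Adding the two pieces and dividing by $\Gamma(q)$ gives $\mathbb{E}\, J_h(t)^{-q}\le C\, t^{-q/\rho}$ on $(0,1]$ with $C=C(q,\rho,c_0,\varepsilon_0)$.

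There is no real obstacle here: the argument is a routine Laplace-transform-plus-scaling computation. The only points deserving a word of care are the low-frequency range $\beta\le\beta_0$, where the power lower bound on $g$ need not hold and one simply uses $\e^{-tg}\le 1$ together with $t\le 1$ (this is exactly where the restriction $t\in(0,1]$ enters), and the possibility that $m\{h\ge\varepsilon\}$ be infinite, which as noted above only strengthens the estimate. Incidentally the hypothesis $q\ge 1$ is not needed for this particular lemma --- any $q>0$ works, as $\beta^{q-1}$ is integrable near $0$ and $u^{q-1}\e^{-c_1 u^{\rho}}$ is integrable on $[0,+\infty)$ --- it is stated in this form only because the lemma is applied with such $q$.
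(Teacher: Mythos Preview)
Your proof is correct and follows exactly the route the paper intends: the paper does not spell out a proof but merely says the result follows ``immediately'' from the Laplace-transform identity (citing Norris), and your argument is precisely that standard derivation --- bound $1-\e^{-\beta h}$ below by an indicator, use the hypothesis to get $g(\beta)\gtrsim \beta^\rho$ for large $\beta$, then rescale. Your remarks on the role of $t\le 1$ and on $q>0$ sufficing are also accurate.
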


Let $\phi_\delta\in C^\infty(\mathbb{R}\setminus\{0\})$ be given by \REQ{E7}.  Applying Lemma \ref{L7} to $m(\d \xi)$ satisfying hypothesis $(ii)$ of Theorem \ref{T1}  and $h=\phi_\delta$ we obtain the following result.
\begin{corollary}\label{C1}
For any $q\ge 1$ there is a constant $C= C(q,T)$ such that
$$
\mathbb{E}\, J_{\phi_\delta}(t)^{-q}\le Ct^{-\frac{\kappa q}{\rho}}, \qquad t\in (0,T].
$$
Moreover,
$$
\mathbb{E} \, J_{\phi_\delta}(t)= t \int_{\mathbb{R}}\phi_\delta (\xi ) m(\d \xi)<+\infty.
$$
\end{corollary}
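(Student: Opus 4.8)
The plan is to deduce Corollary \ref{C1} directly from Lemma \ref{L7}, the only real work being to identify the correct exponent for the function $h=\phi_\delta$. Recall from \REQ{E12} that $\phi_\delta(r)=|r|^\kappa\psi_\delta(r)$ with $\psi_\delta\equiv 1$ on $\{|r|\le\delta/2\}$, $\psi_\delta\equiv 0$ on $\{|r|\ge\delta\}$, and $0\le\psi_\delta\le 1$. First I would check that $\phi_\delta$ satisfies the $\liminf$-hypothesis of Lemma \ref{L7} with $\rho$ replaced by $\rho/\kappa$. Since $\phi_\delta(r)=|r|^\kappa$ on $\{|r|\le\delta/2\}$, for every $\varepsilon>0$ with $\varepsilon^{1/\kappa}\le\delta/2$ one has $\{\phi_\delta\ge\varepsilon\}\supseteq\{\varepsilon^{1/\kappa}\le|r|\le\delta/2\}$, hence
$$
m\{\phi_\delta\ge\varepsilon\}\ \ge\ m\{|r|\ge\varepsilon^{1/\kappa}\}\ -\ m\{|r|>\delta/2\}.
$$
The subtracted quantity $c_0:=m\{|r|>\delta/2\}$ is finite because L\'evy measures integrate $\xi^2\wedge 1$, while hypothesis $(ii)$ of Theorem \ref{T1} gives $c_1>0$ and $\varepsilon_0>0$ with $m\{|r|\ge s\}\ge c_1 s^{-\rho}$ for $0<s\le\varepsilon_0$. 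Thus for $\varepsilon$ small,
$$
\varepsilon^{\rho/\kappa}\,m\{\phi_\delta\ge\varepsilon\}\ \ge\ c_1-c_0\,\varepsilon^{\rho/\kappa}\ \longrightarrow\ c_1>0\qquad(\varepsilon\downarrow 0),
$$
so $\liminf_{\varepsilon\downarrow0}\varepsilon^{\rho/\kappa}m\{\phi_\delta\ge\varepsilon\}>0$, which is exactly the hypothesis of Lemma \ref{L7} with exponent $\rho/\kappa$.

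Applying Lemma \ref{L7} with this exponent then yields $\mathbb{E}\,J_{\phi_\delta}(t)^{-q}\le Ct^{-q/(\rho/\kappa)}=Ct^{-\kappa q/\rho}$ for all $q\ge1$ and $t\in(0,1]$, which is the claimed bound on the small-time range. To pass from $(0,1]$ to $(0,T]$ I would use that $t\mapsto J_{\phi_\delta}(t)$ is nondecreasing (the integrand is nonnegative), so for $t\in[1,T]$, $\mathbb{E}\,J_{\phi_\delta}(t)^{-q}\le\mathbb{E}\,J_{\phi_\delta}(1)^{-q}<\infty$ by the previous step; since $t^{-\kappa q/\rho}\ge T^{-\kappa q/\rho}$ on $[1,T]$, enlarging the constant to $C(q,T)$ covers this range as well.

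For the last assertion, the identity $\mathbb{E}\,J_{\phi_\delta}(t)=t\int_{\mathbb{R}}\phi_\delta(\xi)\,m(\d\xi)$ is the elementary mean formula for the Poisson integral of a nonnegative function (equivalently, $J_{\phi_\delta}(t)-t\int_{\mathbb{R}}\phi_\delta\,\d m$ is a martingale once the integral is finite), and finiteness of $\int_{\mathbb{R}}\phi_\delta(\xi)\,m(\d\xi)$ follows from \REQ{E4}: $\phi_\delta$ is bounded, supported in $(-\delta,\delta)$, and $\phi_\delta(\xi)\le|\xi|^\kappa$ there. The computation is essentially routine; the one genuine point is the exponent bookkeeping in the first step, namely that composing with $r\mapsto|r|^\kappa$ turns the singularity exponent $\rho$ of $m$ into $\rho/\kappa$ and hence produces the factor $t^{-\kappa q/\rho}$ rather than $t^{-q/\rho}$.
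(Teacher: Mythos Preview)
Your proof is correct and follows exactly the approach the paper indicates: apply Lemma~\ref{L7} with $h=\phi_\delta$ and observe that hypothesis~$(ii)$ of Theorem~\ref{T1} translates into the $\liminf$ condition with exponent $\rho/\kappa$. The paper states this as a one-line application of Lemma~\ref{L7}; you have simply spelled out the exponent bookkeeping and the routine extension from $(0,1]$ to $(0,T]$, which the paper leaves implicit.
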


\section
{Sharp estimates in the cylindrical $\alpha$-stable case }\label{vv}

%{ \footnotesize E. I do not know how to make this section  relevant   for the paper }

\vskip 0.1 cm  
 Here we are concerned with rather general    perturbation of  $\alpha$-stable case.   Indeed in such  case we can improve the estimate on $Y(t)$ given  in Section 6.1. This estimate according to Remark \ref{qq} leads to the sharp gradient estimates \eqref{w22}.

Below in \eqref{E51} we will strengthen hypotheses \eqref{E5} and \eqref{E6}. In Remark  \ref{s11} we clarify the validity of  the  new assumptions in  the relevant  cylindrical $\alpha$-stable case.
  \begin{lemma} \label{cil}
Let $\alpha \in (0,2)$.   Suppose that  all the assumptions
of Theorem \ref{T1} hold with $\rho = \alpha$ and for any $\kappa > 1 + \alpha /2$.
Moreover, suppose that
\begin{align}\label{E51}
 \limsup_{r \to 0^+}\;  r^{\frac{ - 2\kappa + 2}{\alpha}  + 1 }  \int_{-r} ^{r} \Big [ |\xi|^{2\kappa} \left( \frac{\rho'_j(\xi)}{\rho_j(\xi)}\right)^2 +   |\xi|^{2\kappa -2} \Big] \rho_j(\xi)\d \xi&<+\infty.
\end{align}
Then the following estimate holds for the $\mathbb{R}^d$-valued  process $Y$ (cf \eqref{wr}):
\begin{equation}\label{see}
\mathbb{E} \left\vert Y(t)\right\vert  \le C t^{-\frac{1}{\alpha}}, \qquad t \in (0,T].
\end{equation}
\end{lemma}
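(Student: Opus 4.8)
The idea is to sharpen the argument of Section 6.1 by keeping track of the correlation between the numerators and the denominator $Z^V_{jj}(t)$, which the direct Cauchy--Schwarz estimate destroys. First I would reduce to small $t$: for $t\in[t_0,T]$ estimate \eqref{wr} already gives $\mathbb{E}|Y(t)|\le Ct_0^{-\kappa/\alpha+1/2}\le C'_T t^{-1/\alpha}$, whereas for $t<t_0<\delta/2$ one has $\psi_\delta(s)\equiv 1$ on $[0,t]$, so that, writing $W_j(t):=Z^V_{jj}(t)=\int_0^t\!\int_{\mathbb R}\phi_\delta(\xi)\,\Pi_j(\d s,\d\xi)$, $g_j:=(\phi_\delta\rho_j)'/\rho_j$ and $K_j(t):=\int_0^t\!\int_{\mathbb R}(\phi'_\delta(\xi))^2\,\Pi_j(\d s,\d\xi)$, Remark \ref{R1} gives $Y_j(t)=D_j^*\mathbf 1(t)/W_j(t)+D_jW_j(t)/W_j(t)^2$ with $D_j^*\mathbf 1(t)=-\int_0^t\!\int_{\mathbb R}g_j(\xi)\,\widehat\Pi_j(\d s,\d\xi)$, while $|D_jW_j(t)|\le W_j(t)K_j(t)^{1/2}$ by \eqref{E24}. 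Hence it suffices to prove
\[
\mathbb{E}\Big[\frac{|D_j^*\mathbf 1(t)|}{W_j(t)}\Big]\le Ct^{-1/\alpha},\qquad
\mathbb{E}\Big[\frac{K_j(t)^{1/2}}{W_j(t)}\Big]\le Ct^{-1/\alpha}.
\]
The workhorse is the identity $W_j(t)^{-1}=\int_0^\infty e^{-\beta W_j(t)}\,\d\beta$, combined, for each $\beta$, with a splitting of the jumps of $\Pi_j$ at the critical scale $\epsilon(\beta):=\beta^{-1/\kappa}$ (so that $\beta\phi_\delta(\epsilon)=1$) into the mutually independent parts carried by $\{|\xi|<\epsilon\}$ and $\{|\xi|\ge\epsilon\}$.

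For the large-jump part one uses the pointwise estimates $|\phi'_\delta(\xi)|\le C|\xi|^{-1}\phi_\delta(\xi)$ and (for the $\alpha$-stable type measures at hand, cf.\ Remark \ref{s11}) $|g_j(\xi)|\le C|\xi|^{-1}\phi_\delta(\xi)$ near the origin to dominate the corresponding functionals by $C\epsilon(\beta)^{-1}W_j(t)$, up to an absolutely convergent compensator term once $\kappa$ is chosen $>1+\alpha$ so that $\phi_\delta\rho_j$ vanishes at $0$ (then the compensator of $D^*_j\mathbf 1(t)$ is zero outright). This works because
\[
\int_0^\infty \mathbb{E}\big[\epsilon(\beta)^{-1}W_j(t)\,e^{-\beta W_j(t)}\big]\,\d\beta
=\Gamma(1+1/\kappa)\,\mathbb{E}\,W_j(t)^{-1/\kappa}
\le C\big(\mathbb{E}\,W_j(t)^{-1}\big)^{1/\kappa}\le Ct^{-1/\alpha}
\]
by Jensen's inequality and Corollary \ref{C1}. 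For the small-jump part one uses independence and $\mathbb{E}|D_j^{<\epsilon}(t)|\le\big(t\int_{|\xi|<\epsilon}g_j(\xi)^2 m_j(\d\xi)\big)^{1/2}$ (where $D_j^{<\epsilon}(t)$ is the part of $D_j^*\mathbf 1(t)$ supported on $\{|\xi|<\epsilon\}$), which by the \emph{sharp} hypothesis \eqref{E51} is bounded by $Ct^{1/2}\epsilon^{(2\kappa-2-\alpha)/(2\alpha)}$, multiplied by $\mathbb{E}\,e^{-\beta W_j^{\ge\epsilon}(t)}=\exp\!\big(-t\int_{|\xi|\ge\epsilon}(1-e^{-\beta\phi_\delta(\xi)})m_j(\d\xi)\big)\le e^{-ct\beta^{\alpha/\kappa}}$, the last bound coming from hypothesis $(ii)$; the substitution $v=t\beta^{\alpha/\kappa}$ then turns the remaining $\beta$-integral into $Ct^{-1/\alpha}$ again. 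The term with $K_j(t)$ is treated by the identical scheme, using $(\phi'_\delta)^2\le C|\xi|^{-2}\phi_\delta^2$ for the large-jump part and \eqref{E51} for the small-jump part. A bounded range of $\beta$, and (for fixed $t$) the low-probability event that $Z_j$ has a jump of size $\ge\delta/2$, are disposed of separately with the crude bounds of Section 6.1, which there are amply sufficient.

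The real content lies in the exponent bookkeeping: each of the above steps must deliver \emph{exactly} the exponent $-1/\alpha$ after integration in $\beta$. This is why the splitting scale has to be taken precisely at $\epsilon(\beta)=\beta^{-1/\kappa}$, why $\kappa$ has to be chosen large enough that $g_j\in L^1(m_j)$ near the origin (so that the compensator of $D^*_j\mathbf 1(t)$ vanishes and does not spoil the rate), and why \eqref{E51} must be imposed in its quantitative form rather than merely \eqref{E5}--\eqref{E6}: the naive bound $\mathbb{E}|D^*_j\mathbf 1(t)|^2\le Ct$ used in Section 6.1 only yields $t^{-\kappa/\alpha+1/2}$. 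I expect the main obstacle to be exactly this --- verifying that every error term (the compensators, the contribution of the moderate jumps, and each of the $\beta$-integrals) is genuinely $O(t^{-1/\alpha})$ and not $O(t^{-1/\alpha-\varepsilon})$. Once the two displayed bounds are established, summing over $j=1,\dots,d$ yields \eqref{see}.
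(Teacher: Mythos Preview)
Your Laplace-transform approach is plausible and the exponent bookkeeping can be made to close (at least under the pointwise control of Remark~\ref{s11}, which you invoke for the bound $|g_j|\le C|\xi|^{-1}\phi_\delta$), but it is a genuinely different and considerably more elaborate route than the paper's. The paper does \emph{not} represent $W_j^{-1}$ as $\int_0^\infty e^{-\beta W_j}\d\beta$ and does not use a $\beta$-dependent scale. Instead it fixes $\kappa=1+\tfrac34\alpha$ and splits each numerator once, at the single scale $|\xi|=t^{1/\alpha}$.

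For $D^*\mathbf 1(t)/Z^V(t)$ the paper writes $D^*\mathbf 1(t)=I_1(t)+I_2(t)$ with $I_1$ carried by $\{t^{1/\alpha}<|\xi|<\delta\}$ and $I_2$ by $\{|\xi|\le t^{1/\alpha}\}$. The small piece $I_2$ is handled by Cauchy--Schwarz together with the sharp hypothesis~\eqref{E51}, which is precisely the step where \eqref{E5}--\eqref{E6} alone would lose the $\varepsilon$. The large piece $I_1$ is handled by H\"older with conjugate exponents $(p,q)$, $p\in(1,2)$ close to $1$, using the $L^p$ Burkholder inequality $\mathbb E|I_1|^p\le C t\int_{\{|\xi|>t^{1/\alpha}\}}|g_j|^p\rho_j$ for the compensated integral; with the specific choice $\kappa=1+\tfrac34\alpha$ the resulting exponent lands above $-1/\alpha$. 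For $D Z^V(t)/(Z^V(t))^2$ the paper has an even cleaner trick for the large jumps: on $\{t^{1/\alpha}<|\xi|\le\delta/2\}$ one has $|\phi_\delta'\phi_\delta|=\kappa|\xi|^{-1}\phi_\delta^2\le\kappa t^{-1/\alpha}\phi_\delta^2$, and since $\sum_k a_k^2\le(\sum_k a_k)^2$ for nonnegative $a_k$, $\int\phi_\delta^2\,\Pi\le(\int\phi_\delta\,\Pi)^2=(Z^V)^2$, giving the \emph{pointwise} bound $|K(t)|\le\kappa\,t^{-1/\alpha}$ with no expectation taken. This replaces your entire Laplace-transform computation for the $K_j$ term.

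What each approach buys: the paper's single-scale splitting is shorter, avoids independence arguments, and uses only Corollary~\ref{C1} for the denominator. Your approach is more systematic and would extend more readily to other negative-moment functionals, but here it imports machinery (the $\beta$-integral, the choice $\kappa>1+\alpha$ to kill the compensator, the independence decomposition) that the direct argument sidesteps. One caution on your side: the domination $|g_j|\le C|\xi|^{-1}\phi_\delta$ that you use for the large-jump part is the pointwise condition of Remark~\ref{s11}, not a consequence of the integral hypothesis~\eqref{E51} as stated in the lemma; the paper's H\"older route for $I_1$ is less dependent on that pointwise control.
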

\begin{remark}\label{s11} {\rm We provide a sufficient condition such that
 all the hypotheses  of Lemma  \ref{cil} hold.
 To this purpose  recall that  $\rho_j$ is the $C^1$-density of the L\`evy measure $m_j$ associated to the process $Z_j$;   such density exists   on $(-\delta, \delta) \setminus \{0\}$, $\delta >0$.

 Moreover, $l_\alpha (\xi) := \vert \xi \vert ^{-1-\alpha }$ denotes  the density of the L\'evy measure of a symmetric one-dimensional $\alpha$-stable process, $\alpha \in (0,2)$.

 Assume that there is a positive constant $c$ such that, for $\xi \in (-\delta, \delta) \setminus \{ 0\},$
\begin{equation} \label{wq}
\left\vert \frac{\rho'_j(\xi)}{\rho_j (\xi)}\right\vert \le c\left( \left\vert \xi\right\vert ^{-1} +1\right)\qquad \text{and}\qquad c^{-1}l_\alpha(\xi)\le \rho_j(\xi)\le c l_\alpha(\xi),
\end{equation}
 $j =1, \ldots, d$. It is easy to check that \eqref{wq} implies all the assumptions of Lemma \ref{cil}. Thus under condition \eqref{wq} we obtain \eqref{see} and the sharp gradient estimates \eqref{w22}.
}
\end{remark}

\begin{proof}
%Clearly \eqref{see} and \eqref{44} imply all the assumptions of Theorem %\ref{T1} with the parameter $\rho = \alpha$.
To prove the result we can assume $d=1$ so that  $Y_1 = Y $; $\Pi $ is the associated  Poisson random measure  and we set  $m_1 =\mu$ for the corresponding L\'evy measure having $C^1$-density $\rho$ on $(-\delta, \delta)$.

It is enough to show \eqref{see} for small $t$,  say  $t^{1/\alpha} \wedge t \leq \delta/2 \le 1$.   Note that, for {  $|\xi|\le \delta/2$, $\phi_\delta (\xi)=|\xi|^{\kappa}$.}
%where $\psi = \psi_{\delta}$.

Let us fix
$
 \kappa =  1+ \frac{3}{4} \alpha.
$
 We have
$$
Y(t)=\frac{D^*\mathbf{1}(t)}{Z^V(t)} - D \frac{1}{Z^V(t)} =  \frac{D^*\mathbf{1}(t)}{Z^V(t)} +  \frac{DZ^{V}}{\left(Z^V(t)\right)^2}.
$$
We have
\begin{align*}
D^*\mathbf{1}(t)&= -  \int_0^t \int_{(-\delta,\delta)}  \frac{\phi_\delta'(\xi) \rho(\xi )+ \phi_\delta(\xi)\rho'(\xi)} {\rho(\xi)} { \hat \Pi ( \d s, \d \xi),}\\
 D Z^{V}(t) &=   \int_0^t \int_{(-\delta, \delta)}\phi_{\delta}(\xi)  \phi_\delta'(\xi)\Pi (\d s, \d \xi),\\
 Z^V(t) &= \int_0^t \int_{(-\delta,\delta)} \phi_{\delta}(\xi) \Pi(\d s,\d \xi).
 \end{align*}
 We are showing that
 \begin{equation}\label{sdd1}
 \mathbb{E} \left\vert  \frac{ D^*\mathbf{1}(t)}{Z^V(t)}\right\vert \le C _2 t^{- \frac{1}{\alpha}},
 \end{equation}
We concentrate on $D^*\mathbf{1}(t) $:
\begin{gather*}
D^*\mathbf{1}(t) = I_1 (t) + I_2(t), \;\;\;
I_1(t) =    \int_0^t \int_{\{ t^{1/\alpha} < |\xi| <\delta\} }  \frac{\phi_\delta'(\xi) \rho(\xi )+ \phi_\delta(\xi)\rho'(\xi)} {\rho(\xi)}
{ \hat \Pi ( \d s, \d \xi),}
%\Pi ( \d s, \d \xi),
\\ I_2(t)
 =   \int_0^t \int_{\{|\xi| \le t^{1/\alpha} \} }  \frac{\phi_\delta'(\xi) \rho(\xi )+ \phi_\delta(\xi)\rho'(\xi)} {\rho(\xi)}
{ \hat \Pi ( \d s, \d \xi).}
 %\Pi ( \d s, \d \xi)
\end{gather*}
  Concerning $I_1(t)$ we can improve  some estimates  of Section 6.1; using the H\"older inequality (because $\xi$ is separated from $0$):
  for  $q>2,$ $p \in (1,2) \colon 1/p+1/q=1$ we have
\begin{align*}
\mathbb{E}\left\vert   I_1(t) \left(Z_{}^V(t)\right)^{-1} \right\vert &\le \left(\mathbb{E}\left\vert
 I_1(t)
\right\vert ^p\right)^{1/p} \left(\mathbb{E}\left\vert  Z_{}^V(t) \right\vert ^{-q}\right)^{1/q}.
\end{align*}
By Corollary  \ref{C1}, there is a constant $C_1$ such that $\mathbb{E}\left\vert  Z_{}^V(t) \right\vert ^{-q}\le C_1t^{-\kappa q/\rho }$.  Since $p\in (1,2)$,    there are constants $C_2$ and $C_3$ such that
$$
\mathbb{E}
| I_1(t)
 |^p \le C_2\int_0^t \int_{ \{ t^{1/\alpha} < |\xi| <\delta\}  }
 \left\vert 
  \frac{\phi_\delta'(\xi) \rho(\xi )+ \phi_\delta (\xi)\rho'(\xi)} {\rho(\xi)}
\right\vert ^p \rho(\xi)\d \xi \d s\le C_3 t,
$$
(see { e.g. Lemma 8.22} in \cite{Peszat-Zabczyk}) where the last estimate follows from \REQ{E5}.
  Choosing  $\varepsilon \in (0,1/4) $  so that
$
-\frac{\kappa}{\alpha} $ $ +1-\varepsilon$ $ >-\frac{1}{\alpha},
$
and taking $q$ large, we can see that  there exists  a constant $c_\varepsilon$ such that
\begin{equation}\label{E233}
\mathbb{E}\left\vert  I_1(t) \left(Z_{}^V(t)\right)^{-1} \right\vert\le c_\varepsilon t^{- \frac{\kappa}{\alpha} +1-\varepsilon} \le c_\varepsilon
 t^{-\frac{1}{\alpha}}, \qquad t\in [0,T].
\end{equation}
 Let us consider $I_2(t)$.
  By the isometry formula,  Lemma \ref{L6} and using \eqref{E51} we find
 \begin{equation} \label{w221}
\begin{aligned}
&\mathbb{E}\, |  { I_2(t)}  (Z^V(t))^{-1}|
\\ 
&\quad \le C_6\left(  \int_0^t \int_{\{ \vert\xi\vert \le t^{1/\alpha} \}}   \left\vert \xi\right\vert ^{2\kappa-2} \left\vert \xi\right\vert ^{-1-\alpha} \d s\d \xi \right)^{1/2}t^{-\frac{\kappa}{\alpha}}
\\ 
&\quad \le C_7 t^{\frac{2\kappa -2}{2\alpha} - \frac {\kappa}{\alpha}}= C_7t^{-\frac{1}{\alpha}},
\end{aligned}
\end{equation}
 which completes the proof of \eqref{sdd1}. Now we are showing that
 \begin{equation}\label{sdd}
 \mathbb{E} \left\vert  \frac{D Z^{V}{ (t)}}{\left(Z^V(t)\right)^2}\right\vert \le C_8  t^{- \frac{1}{\alpha}}.
  \end{equation}
To this end note that
\begin{align*}
 &\left\vert \int_0^t \int_{\{\delta/2<\vert \xi\vert \le \delta\}}\phi_{\delta}(\xi)  \phi_\delta'(\xi)\Pi (\d s, \d \xi)\right\vert  \\
 &\le \left[ \int_0^t \int_{\{\delta/2<\vert \xi\vert \le \delta\}}\phi_\delta^2(\xi)  \Pi (\d s, \d \xi)\right]^{1/2}
\left[\int_0^t \int_{\{\delta/2<\vert \xi\vert \le \delta\}}\left(  \phi_\delta'(\xi)\right)^2 \Pi (\d s, \d \xi)\right]^{1/2}  \\
& \le  \int_0^t \int_{\{\delta/2<\vert \xi\vert \le \delta\}}\phi_\delta(\xi)  \Pi (\d s, \d \xi)
\int_0^t \int_{\{\delta/2<\vert \xi\vert \le \delta\}}\left\vert  \phi_\delta'(\xi)\right\vert \Pi (\d s, \d \xi)\\
&\le Z^V(t) \int_0^t \int_{\{\delta/2<\vert \xi\vert \le \delta\}}\left\vert  \phi_\delta'(\xi)\right\vert \Pi (\d s, \d \xi).
\end{align*}
Hence, as the arguments from  the derivation of \eqref{E233} we obtain
\begin{align*}
\mathbb{E}\left\vert \frac{{ \int_0^t \int_{\{ \delta /2< |\xi| <\delta\}} } \phi_\delta(\xi)\phi_\delta'(\xi)\Pi ( \d s, \d \xi)} { \left(Z^V(t)\right)^2 }       \right\vert
&\le \mathbb{E}\, \frac{{ \int_0^t \int_{\{ \delta /2< |\xi| <\delta\}} } \left\vert \phi_\delta'(\xi)\right\vert \Pi ( \d s, \d \xi)} { Z^V(t) }\\
&\le C_9t^{-\frac{\kappa}{\alpha} + 1-\varepsilon}\le C_{10} t^{-\frac 1\alpha}.
\end{align*}
Set
 $$
K(t) :=  (Z^V(t))^{-2}\int_0^t \int_{\{ \delta/2\ge |\xi| >  t^{1/\alpha}\} }\, \phi_{\delta}'(\xi)\phi_{\delta}(\xi)  \, \Pi(\d  s, \d  \xi)
$$
and
  $$
H(t) :=  (Z^V(t))^{-2}\int_0^t\int_{\{ |\xi| \le  t^{1/\alpha}\}}\, \phi_{\delta}'(\xi)\phi_{\delta}(\xi) \, \Pi(\d  s, \d  \xi).
$$
Since  $\phi_\delta(\xi)=\vert \xi\vert ^\kappa$ if $\vert \xi\vert \le \delta/2$,  we have
\begin{align*}
\left\vert K(t) \right\vert &\le \kappa \left(Z^V(t)\right)^{-2}   \int_0^t\int_{\{ \delta/2\ge |\xi| >  t^{1/\alpha}\} }\phi_\delta^2(\xi)|\xi|^{-1} \, \Pi(\d  s, \d  \xi) \\
&\le \kappa \left(Z^V(t)\right)^{-2} \,  t^{-\frac{1}{\alpha}}\,  \int_0^t\int_{ \mathbb{R}} \phi_\delta^2(\xi)  \, \Pi(\d  s, \d  \xi)  \\
&= \kappa \left(Z^V(t)\right)^{-2} \,  t^{-\frac{1}{\alpha}}  \left[\left( \int_0^t\int_{ \mathbb{R}} \phi_\delta^2(\xi) \, \Pi(\d  s, \d  \xi)\right)^{1/2}\right]^2 \\
%\\&=C_1(Z^V(t))^{-2} \left( t^{-\frac{1}{\alpha}} +1\right)\sum_{s\leq t}\psi^2(Z(s)-Z(s-))
%\\& \le C_1(Z^V(t))^{-2} \left( t^{-\frac{1}{\alpha}} +1\right)\left[\sum_{s\leq t}\psi(Z(s)-Z(s-))\right]^2
&= \kappa \left(Z^V(t)\right)^{-2} \, t^{-\frac{1}{\alpha}}  \left[\int_0^t\int_{ \mathbb{R}} \phi_\delta(\xi)  \, \Pi(\d  s, \d  \xi)\right]^2= \kappa\,   t^{-\frac{1}{\alpha}}.
\end{align*}

We are dealing now with $H(t)$.   Since
\begin{align*}
&\int_0^t\int_{\{ \vert \xi\vert  \le  t^{1/\alpha}\}}\, |\phi_{\delta}'(\xi)\phi_{\delta}(\xi)| \Pi(\d  s, \d  \xi) \\
&\quad \le
 \left(\int_0^t\int_{ \{ |\xi| \le   t^{1/\alpha}\} }\,  {\left(\phi_\delta'(\xi)\right)^2}\Pi(\d  s, \d  \xi) \right)^{1/2}
\left(\int_0^t \int_{ \mathbb {R} }\,  {\phi_\delta^2(\xi)} \Pi(\d  s, \d  \xi)\right)^{1/2}
\\
&\quad \le Z^V(t) \int_0^t\int_{\{ |\xi| \le   t^{1/\alpha}\} }\left\vert {\phi_\delta'(\xi)}\right\vert \Pi(\d  s, \d  \xi)
= \kappa  Z^V(t) \int_0^t\int_{\{ |\xi| \le   t^{1/\alpha}\} }\left\vert \xi\right\vert^{\kappa -1} \Pi(\d  s, \d  \xi),
\end{align*}
we have, arguing as in  \eqref{w221}, using again  \eqref{E51},
\begin{align*}
 {\mathbb E} \left\vert H(t)\right\vert  &\le \kappa \, {\mathbb E} \, \frac{\int_0^t\int_{\{ |\xi| \le  t^{1/\alpha}\}}\left\vert  \xi\right\vert^{\kappa -1}   \Pi(\d  s, \d  \xi)} {Z^V(t)} \le C_{12} t^{-\frac{1}{\alpha}},
\end{align*}
which finishes  the proof of \eqref{sdd}.
%%%%%%%%%%
\end{proof}

%%%%%%%%%%%%%

\section*{Acknowledgment}
We would like to  thank prof. Jerzy Zabczyk for very useful discussions on the topic.

\end{document}